\newtheorem{theorem}{Theorem}[section]
\newtheorem{proposition}{Proposition}[section]
\newtheorem{remark}{Remark}[section]
\def\bp{\begin{proof}}
\def\ep{\end{proof}}
\def\Lc{{\mathcal L}}
\begin{document}

\title[The scalar curvature of the de Sitter-Schwarzschild space]{Deforming the scalar curvature of the de Sitter-Schwarzschild space}

\author{C\'{\i}cero Tiarlos Cruz}
\author{Levi Lopes de Lima}
\author{Jos\'e Fabio Montenegro}
\address{Universidade Federal do Cear\'a (UFC),
Departamento de Matem\'{a}tica, Campus do Pici, Av. Humberto Monte, s/n, Bloco 914, 60455-760,
Fortaleza, CE, Brazil.}
\email{tiarlos@alu.ufc.br, levi@mat.ufc.br, fabio@mat.ufc.br}
%\email{levi@mat.ufc.br}
%\email{fabio@mat.ufc.br}
\thanks{L. L. de Lima and J. F. Montenegro were partially supported by CNPq/Brazil research grants. C. T. Cruz was supported by a CNPq Scholarship.}

\begin{abstract}
Building upon the work of Brendle, Marques and Neves  on the construction of counterexamples to Min-Oo's conjecture, we exhibit deformations of the de Sitter-Schwarz\-schild space of dimension $n\geq 3$  satisfying the dominant energy condition and agreeing with the standard metric along the event and cosmological horizons, which remain totally geodesic. Our results actually  hold  for generalized  Kottler-de Sitter-Schwarzschild spaces whose cross sections are compact rank one symmetric spaces and indicate that there exists no analogue of the Penrose inequality in the case of positive cosmological constant.  
As an application we construct  solutions of Einstein field equations satisfying the dominant energy condition and being asymptotic to (or agreeing with) the de Sitter-Schwarzschild space-time both at the event horizon and at spatial infinity. 
\end{abstract}

\maketitle

\section{Introduction}\label{intro}

Let $(M,g)$ be a smooth Riemannian manifold of dimension $n\geq 3$ and $\rho:M\to \mathbb R$ a smooth function. We say that the triple $(M,g,\rho)$ 
is a (time-symmetric) {\em initial data set} (IDS) with cosmological constant $\Lambda\in\mathbb R$ if it satisfies the {scalar constraint equation}
\begin{equation}
\label{consteq}
\frac{R_g}{2}-\Lambda=\rho,
\end{equation}
where $R_g$ is the scalar curvature of $g$. The justification for the terminology comes from the well-known fact that the Cauchy development of such an  IDS 
yields a Lorentzian manifold of dimension $n+1$ which is a time-symmetric solution of Einstein field equations with energy density $\rho$ \cite{C-B}. 

An IDS as above is {\em vacuum} if $\rho\equiv 0$. More generally, it satisfies the {\em dominant energy condition} if $\rho\geq 0$. This latter property singles out those IDS which are relevant from the physical viewpoint.
As usual we normalize the cosmological constant so that
\[
\Lambda=\Lambda_{n,\epsilon}=\frac{n(n-1)}{2}\epsilon,\quad \epsilon=0,\pm 1.
\]
Thus, a vacuum has scalar curvature $R_g=n(n-1)\epsilon$. 

A vacuum IDS $(M,g)$ is {\em static }   
if there exists a smooth positive function  $v$ on $M$ satisfying 
\begin{equation}\label{statcond}
\nabla_{g}^2v-(\Delta_{g}v)g-v{\rm Ric}_{g}=0.
\end{equation} 
A triple $(M,g,v)$ as above is called a (static) {\em Killing initial data} (KID). 
It is known that the KID condition is equivalent to the assertion that the Lorentzian metric 
\begin{equation}\label{cauchyexp}
\overline g=-v^2dt^2+g,
\end{equation}
defined on the manifold $\overline M=\mathbb R\times M$, is a solution to the Einstein field equations in vacuum with  cosmological constant  $\Lambda_{n,\epsilon}$: ${\rm Ric}_{\overline g}=n\epsilon\overline g$. Since the time-like vector field $\partial_t$ is Killing, the orthogonal space-like slices  are totally geodesic and isometric to each other, which means that the Cauchy development (\ref{cauchyexp}) of a KID describes a static universe indeed. Thus,  
it is natural to regard KIDs as ground states in the theory. From this perspective, 
a basic question concerning a KID is whether it  admits a deformation into another IDS which satisfies the dominant energy condition and has the same asymptotic behavior at  infinity.  
If no such deformation exists then we say that the given KID is {\em rigid}.  
Since in General Relativity the total energy is defined by means of  a certain surface integral at spatial infinity, rigidity precludes the existence of exotic IDS's with minimal energy within the given class.

This rigidity issue turns out to be an astonishingly difficult problem. For instance, if one naively tries to deform the KID by slightly perturbing the scalar curvature, it is well-known that  (\ref{statcond}) is precisely the obstruction to implementing this procedure \cite{FM} \cite{CoP}. 
%In particular, the powerful perturbative methods introduced by %Corvino \cite{C} do not apply here.
On the other hand, all the rigidity statements established so far ultimately rely on considerations of mass-type invariants.  In order to motivate our main results (Theorems \ref{main0} and \ref{main} below) we now briefly recall a few relevant  contributions in this direction; comprehensive accounts on the subject can be found in \cite{B} and \cite{CoP}.

It is obvious that $\mathbb R^n$ endowed with the standard flat metric is a KID with $\epsilon=0$.
Let $g$ be a complete metric on $\mathbb R^n$ with $R_g\geq 0$. Assume further that $g$ decays to the flat metric at infinity with rate $O(|x|^{-\tau})$, $\tau>n-2$.  
Then  the celebrated
positive mass theorem in General Relativity   \cite{SY} \cite{W} implies that $g$ is actually flat. Thus, $\mathbb R^n$ is rigid in the above sense. 

By adapting Witten's spin techniques, Min-Oo \cite{M-O} extended the rigidity to the hyperbolic space $\mathbb H^n$, which is a KID with $\epsilon=-1$. Again, his  result also follows  from appropriate versions of the positive mass theorem in the asymptotically hyperbolic case established afterwards by several authors \cite{W} \cite{CH}.

The  unit hemisphere $\mathbb S^n_+=\{x\in \mathbb S^n; x_{n+1}\geq 0\}$, 
endowed with the standard round metric $g_0$, is a KID with $\epsilon=1$.
This is 
the {\em de Sitter space} in physical terminology. Despite the lack of spatial infinity, the equator $\partial \mathbb S^n_+$  plays the role of a cosmological horizon, so it  makes sense to inquire to what extent a metric satisfying the corresponding dominant energy condition is determined by its behavior along $\partial\mathbb S^n_+$. 
Motivated by the rigidity results mentioned above, Min-Oo conjectured: {\em if 
$g$ is a metric on $\mathbb S^n_+$ such that   $R_g\geq n(n-1 )$, $g=g_0$ on $\partial \mathbb S^n_+$ and $\partial \mathbb S^n_+$ is totally geodesic with respect to $g$,
then $g$ is isometric to $g_0$.}

Given the analogy with the rigidity phenomena displayed above, 
Min-Oo's conjecture was widely expected to be true, with several special cases being established over the years. Thus, it  came as  a great surprise when Brendle-Marques-Neves \cite{BMN} exhibited counterexamples to the conjecture in any dimension. 

\begin{theorem}
\label{bmnfund} \cite{BMN}
There exists a metric $\hat g$ on $\mathbb S^n_+$ with the following properties:
\begin{itemize}
\item $R_{\hat g}>n(n-1)$ everywhere;
\item ${\hat g}=g_0$ on $\partial \mathbb S^n_+$.
\item $\partial \mathbb S^{n}_+$ is totally geodesic with respect to $\hat g$. 
\end{itemize}
\end{theorem}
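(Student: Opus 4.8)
The strategy is to disprove rigidity by a perturbation of $g_0$ which cannot increase the scalar curvature to \emph{first} order but does so to \emph{second} order. Write $g_0=dr^2+\sin^2r\,g_{\mathbb S^{n-1}}$ with $r$ the distance to the north pole, so that $\partial\mathbb S^n_+=\{r=\pi/2\}$. It suffices to produce a symmetric $2$-tensor $h$, supported near the equator and transverse-traceless with respect to $g_0$, and a correction $k$ of the same type, both respecting the boundary conditions (that is, leaving the induced metric of $\partial\mathbb S^n_+$ and its second fundamental form unchanged to the relevant order in $t$), such that the metrics $g_t=g_0+th+t^2k$ satisfy $R_{g_t}\ge n(n-1)$, with strict inequality somewhere, for all small $t>0$. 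Since $DR_{g_0}(h)=-\Delta_{g_0}(\mathrm{tr}_{g_0}h)+\mathrm{div}_{g_0}\mathrm{div}_{g_0}h-\langle\mathrm{Ric}_{g_0},h\rangle$ and $\mathrm{Ric}_{g_0}=(n-1)g_0$, the transverse-traceless condition forces $DR_{g_0}(h)\equiv 0$; hence $R_{g_t}=n(n-1)+t^2\,\Theta+O(t^3)$ with $\Theta=DR_{g_0}(k)+\tfrac12\,D^2R_{g_0}(h,h)$, and everything reduces to choosing $k$ so that $\Theta\ge 0$, $\Theta\not\equiv 0$.

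The obstacle is the static potential of the round hemisphere, $v=x_{n+1}=\cos r$: it is positive in the interior, vanishes on $\partial\mathbb S^n_+$, and solves $DR_{g_0}^{*}v=\nabla^2_{g_0}v-(\Delta_{g_0}v)\,g_0-v\,\mathrm{Ric}_{g_0}=0$. Pairing $\Theta$ with $v$ and integrating, the term $\int_{\mathbb S^n_+}v\,DR_{g_0}(k)\,dv_{g_0}$ collapses, after integration by parts and using $DR_{g_0}^{*}v=0$, to a boundary integral over $\partial\mathbb S^n_+$; although $v$ vanishes there, its normal derivative $\partial v/\partial\nu=-1$ does not, so this boundary term is genuinely present and is controlled by the admissible normal data of $k$. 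Thus the construction is feasible exactly when one can choose the transverse-traceless $h$ so that
\[
\tfrac12\int_{\mathbb S^n_+}v\,D^2R_{g_0}(h,h)\,dv_{g_0}\ +\ (\text{boundary terms fixed by the admissible data of }h\text{ and }k)\ >\ 0 .
\]
Producing such an $h$ is the heart of the matter and the step I expect to be the main obstacle: in view of the rigidity phenomena recalled above one naively expects this weighted second variation to be non-positive, and establishing the contrary is precisely the surprising point. The mechanism is that the static potential $v$ vanishes on $\partial\mathbb S^n_+$ but has nonzero normal derivative, so the obstruction ``leaks'' through the boundary; choosing for $h$ an explicit profile built from a nontrivial trace-free symmetric tensor on the cross-section $\mathbb S^{n-1}$ --- which exists only because $n-1\ge 2$, i.e. $n\ge 3$ --- concentrated near the equator, one computes that this mixed bulk-and-boundary quantity is strictly positive.

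Granting such an $h$, the argument is completed by linear elliptic theory. One solves $DR_{g_0}(k)=\psi-\tfrac12\,D^2R_{g_0}(h,h)$ for $k$ subject to the Dirichlet-type boundary conditions that preserve, through order $t^2$, the induced metric of $\partial\mathbb S^n_+$ and the vanishing of its second fundamental form; the positivity just arranged leaves exactly the room to meet the single compatibility condition coming from $v$ while prescribing $\psi\ge 0$, $\psi\not\equiv 0$, and the gauge freedom $k\mapsto k+\mathcal L_Xg_0$ --- which does not change $DR_{g_0}(k)$ --- absorbs the remaining boundary data. Then $R_{g_t}=n(n-1)+t^2\psi+O(t^3)\ge n(n-1)$ with strict inequality on $\{\psi>0\}$ for small $t>0$, while all boundary requirements hold by construction; this already contradicts Min-Oo's conjecture. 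To obtain $R_{\hat g}>n(n-1)$ \emph{everywhere} one observes that this metric has non-constant scalar curvature, hence is not round and carries, generically, no static potential of its own, so a further --- now unobstructed --- small perturbation raises $R$ strictly above $n(n-1)$ throughout $\mathbb S^n_+$, at the cost of retaining only $\hat g=g_0$ along $\partial\mathbb S^n_+$ with $\partial\mathbb S^n_+$ totally geodesic, which is exactly the assertion. A routine check that the $O(t^3)$ remainder and this final perturbation do not disturb the inequalities finishes the proof.
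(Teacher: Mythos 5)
There is a genuine gap, and it sits exactly where you flag it yourself: the positivity of the weighted second variation is asserted, not proved, and your framework makes it harder (perhaps impossible) to prove than the actual argument does. In the proof this paper follows (and reproduces in Theorem \ref{prelprop} for the generalized setting), the first-order deformation is not a transverse--traceless tensor but a Lie derivative $h=\mathcal{L}_Xg_0$ with $X=\eta\nu$ and $\nabla_\nu X=-\nabla\eta$ on the equator; this kills $DR_{g_0}(h)$ for trivial reasons, vanishes pointwise on $\partial\mathbb{S}^n_+$, but \emph{changes the mean curvature at first order}: $\partial_tH|_{t=0}=L\eta$, where $L=-\Delta-(n-1)$ is the Jacobi operator of the equator. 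The identity $\int_M Qv\,d\mathrm{vol}=2\int_{\partial M}\eta L\eta$ then localizes the entire second variation on the boundary, and the whole difficulty is compressed into constructing an explicit $\eta$ on $\mathbb{S}^{n-1}$ with $L\eta>0$ pointwise \emph{and} $\int\eta L\eta>0$ (nontrivial, since $L$ has negative eigenvalues on constants and first harmonics; see the series $\psi=\sum a_{2k}f^{2k}$ in Proposition \ref{funcchoice}). By insisting instead that $h$ and $k$ preserve the second fundamental form of $\partial\mathbb{S}^n_+$, you discard the very mechanism that produces positivity. Worse, your claim that $\int_{\mathbb{S}^n_+}v\,DR_{g_0}(k)$ leaves a usable boundary term ``controlled by the admissible normal data of $k$'' is incorrect: with $v=0$ and $\partial_\nu v=-1$ on the equator, that boundary integral reduces to $-\int_{\partial\mathbb{S}^n_+}\mathrm{tr}_{\partial\mathbb{S}^n_+}k$, which vanishes once $k$ preserves the induced boundary metric at order $t^2$. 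So your compatibility condition forces $\int v\,\psi=\tfrac12\int v\,D^2R_{g_0}(h,h)$ with no boundary slack, and you have given no construction of an $h$ making the right-hand side positive; that is the theorem, not a footnote.

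Two further steps are also unsupported. Solving $DR_{g_0}(k)=\psi-\tfrac12 D^2R_{g_0}(h,h)$ ``by linear elliptic theory'' ignores that $DR_{g_0}$ is underdetermined and not elliptic; the paper avoids this entirely by taking a \emph{conformal} correction $\tfrac{1}{2(n-1)}t^2ug_0$ with $u$ solving the genuinely elliptic scalar Dirichlet problem $\Delta u+nu=Q-\mu$, $u|_{\partial M}=0$, whose solvability is exactly the Fredholm condition $\int(Q-\mu)v=0$ defining $\mu$. And the closing move --- upgrading $R\geq n(n-1)$ to $R>n(n-1)$ everywhere by a ``now unobstructed'' perturbation while simultaneously restoring a totally geodesic boundary --- is where the actual proof needs its second main ingredient: one builds a separate comparison metric (here $\tilde g=(1-tv^{3/2})g_m$, Proposition \ref{rotsimmet}) that is totally geodesic at the boundary with $R>n(n-1)$ nearby, and glues it to the deformed metric using the corner-smoothing Theorem \ref{gluing}, whose hypothesis is precisely the strict mean-curvature jump $H_g>0=H_{\tilde g}$ that your construction was designed to suppress. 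Without the Jacobi-operator test function and without the mean-curvature jump feeding the gluing theorem, the proposal does not close.
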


In fact, there are counterexamples whose geometry agrees with the standard one in a whole neighborhood of the boundary.

\begin{theorem}\label{bmndef}
\cite{BMN} There exists a metric $\hat g$ on $\mathbb S^n_+$ with the following properties:
\begin{itemize}
\item $R_{\hat g}\geq n(n-1)$, with the strict inequality holding somewhere;
\item $\hat g=g_0$ in a neighborhood of $\partial \mathbb S^n_+$.
\end{itemize}
\end{theorem}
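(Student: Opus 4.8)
The plan is to carry out, in the spirit of Brendle-Marques-Neves, an \emph{interior} deformation of $g_0$ that is genuinely second order in the deformation parameter, and then to upgrade it to an exact solution by a gluing argument of Corvino type. It suffices to produce, for some small $t>0$, a metric $\hat g$ on $\mathbb{S}^n_+$ which coincides with $g_0$ outside a compact subset of $\mathbb{S}^n_+\setminus\partial\mathbb{S}^n_+$ and has $R_{\hat g}\ge n(n-1)$ with strict inequality somewhere, since such a $\hat g$ automatically equals $g_0$ on a neighbourhood of $\partial\mathbb{S}^n_+$. Let $v_0=x_{n+1}$, which is positive on the interior and vanishes on $\partial\mathbb{S}^n_+$, and let $DR_{g_0}$ denote the linearisation of the scalar curvature at $g_0$; its formal $L^2$-adjoint is precisely the operator in (\ref{statcond}), namely $DR^*_{g_0}(v)=\nabla^2 v-(\Delta v)g_0-v\,\mathrm{Ric}_{g_0}$. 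Since $\nabla^2 v_0=-v_0 g_0$, $\Delta v_0=-n v_0$ and $\mathrm{Ric}_{g_0}=(n-1)g_0$, one has $DR^*_{g_0}(v_0)=0$, and therefore $\int_{\mathbb{S}^n_+}v_0\,DR_{g_0}(h)\,dV_{g_0}=0$ for every symmetric $2$-tensor $h$ supported in the interior. Thus the first-order term of $\int_{\mathbb{S}^n_+}v_0\,(R_g-n(n-1))\,dV_g$ vanishes along any deformation that fixes $g_0$ near the boundary, whereas $R_g\ge n(n-1)$ with $R_g\not\equiv n(n-1)$ would make this integral strictly positive; so no infinitesimal deformation works, and a finite deformation whose effect on this integral is controlled at second order is needed.

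Accordingly, fix a transverse-traceless symmetric $2$-tensor $h$ supported in a compact set $K\subset\mathbb{S}^n_+\setminus\partial\mathbb{S}^n_+$. Since $h$ is trace-free and divergence-free and $\mathrm{Ric}_{g_0}=(n-1)g_0$, we have $DR_{g_0}(h)=0$, so along $g_t:=g_0+th$ the first-order term drops out:
\[
\int_{\mathbb{S}^n_+}v_0\,\big(R_{g_t}-n(n-1)\big)\,dV_{g_t}\;=\;t^2\!\int_{\mathbb{S}^n_+}v_0\,D^2R_{g_0}(h,h)\,dV_{g_0}\;+\;O(t^3).
\]
The heart of the matter is to choose $h$ so that $a:=\int_{\mathbb{S}^n_+}v_0\,D^2R_{g_0}(h,h)\,dV_{g_0}>0$. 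This is delicate: if $h$ concentrates near a point the blow-up is flat, and there $\int D^2R(h,h)$ is negative (positivity of the linearised-gravity energy), so $h$ must instead be spread over a region on which $v_0$ varies by a definite amount, and one must integrate $D^2R_{g_0}(h,h)$ against $v_0$ by parts, exploiting the constant curvature of $g_0$ together with $\nabla^2 v_0=-v_0 g_0$ so that the terms carrying $\nabla v_0$ prevail. It is convenient to take $h$ also invariant under the reflections $x_i\mapsto -x_i$, $1\le i\le n$ (for instance by symmetrising one off-axis block), so that $\int_{\mathbb{S}^n_+}(R_{g_t}-n(n-1))\,x_i\,dV_{g_t}=0$ for $i\le n$, leaving the $v_0$-moment as the only obstruction a gluing argument will see.

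Granting such an $h$: for $t>0$ small, $g_t=g_0+th$ equals $g_0$ outside $K$, has $R_{g_t}-n(n-1)=O(t^2)$, and satisfies $\int_{\mathbb{S}^n_+}v_0\,(R_{g_t}-n(n-1))\,dV_{g_t}=t^2 a+O(t^3)>0$. Pick a geodesic ball $\Omega=B_\delta(N)\subset\mathbb{S}^n_+\setminus\partial\mathbb{S}^n_+$ (with $N$ a pole and $\delta<\pi/2$, so $v_0>0$ on $\Omega$) containing $K$, and a fixed function $f\ge 0$, $f\not\equiv 0$, supported in $\Omega$ and rotationally symmetric about $N$, rescaled by a factor of order $t^2$ so that the moments of $f$ against the static potentials of $(\Omega,g_t)$ match those of $R_{g_t}-n(n-1)$: this is possible because those static potentials are close to $x_1|_\Omega,\dots,x_{n+1}|_\Omega$, the moments with $i\le n$ of both sides vanish by the reflection symmetry, and the one remaining moment of $R_{g_t}-n(n-1)$ is the positive number $\approx t^2 a$ while $\int_\Omega f\,x_{n+1}\,dV_{g_0}>0$, so the single nontrivial matching condition is met by scaling. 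These moment conditions are exactly the cokernel conditions for solving $R_{g_t+k}=n(n-1)+f$ with $k$ supported in $\Omega$; one solves it by seeking $k=DR^*_{g_t}(u)$ and inverting the elliptic operator $DR_{g_t}\circ DR^*_{g_t}$ in weighted H\"older spaces in which $u$, hence $k$, vanishes to infinite order along $\partial\Omega$, exactly as in the Corvino-Schoen gluing, the nonlinear terms being absorbed by the implicit function theorem since everything is $O(t^2)$ small (and if the deformation has destroyed some of the static potentials there are simply fewer obstructions to meet). Then $\hat g:=g_t+k$ equals $g_0$ near $\partial\mathbb{S}^n_+$ and has $R_{\hat g}=n(n-1)+f\ge n(n-1)$, with strict inequality wherever $f>0$; this is the asserted metric.

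The step I expect to be the genuine obstacle is the second-variation estimate $a>0$: the hemisphere sits exactly on the borderline where this quantity changes sign, so the choice of $h$ --- its support, its profile, its symmetry --- and the integration-by-parts bookkeeping, which must exploit the special identities $\mathrm{Ric}_{g_0}=(n-1)g_0$ and $\nabla^2 v_0=-v_0 g_0$, have to be arranged with considerable care. By contrast, once $a>0$ is secured the gluing step is a routine application of the Corvino-Schoen machinery.
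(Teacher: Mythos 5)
Your proposal is not the paper's route, and it has a genuine gap at exactly the point where the whole difficulty of the theorem sits: the existence of a compactly supported transverse--traceless tensor $h$ in the open hemisphere with $a=\int_{\mathbb S^n_+}v_0\,D^2R_{g_0}(h,h)\,dV_{g_0}>0$ is asserted, flagged as ``delicate'', and never established. This is not a technical loose end to be tightened later --- it \emph{is} the theorem. Indeed, in the actual proof the positivity does not come from an interior second-variation computation at all: it comes from the boundary. One produces a function $\eta$ on the totally geodesic equator with $L\eta>0$ and $\int\eta L\eta>0$ for the Jacobi operator $L=-\Delta_{\mathbb S^{n-1}}-(n-1)$ (the analogue in this paper is Propositions \ref{funcchoice} and \ref{newfunc}), extends $\eta\nu$ to a vector field $X$ on the hemisphere, and deforms along $g_0+t\mathcal{L}_Xg_0$ corrected by a conformal term; the second variation of $\int R_g v_0\,dV_g$ then equals $2\int_{\partial}\eta L\eta>0$ precisely because the deformation is anchored at $\partial\mathbb S^n_+$, where the borderline coincidence $\lambda_1(\mathbb S^{n-1})=n-1$ is available (see the proof of Theorem \ref{prelprop}). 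An interior TT perturbation has no access to this mechanism, and, as you yourself observe, when $h$ concentrates the relevant quantity is \emph{nonpositive} by the linearized positive-energy heuristic. You give no computation showing that spreading $h$ out and integrating against $v_0$ reverses the sign, and it is far from clear that any interior $h$ achieves $a>0$; the burden is on you to exhibit one, and until then the argument does not start.

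The second half of your plan also diverges from the paper and carries its own (secondary) issues. The localization of the deformation away from $\partial\mathbb S^n_+$ is done in the paper not by Corvino--Schoen gluing but by pulling back under the flow of a conformal vector field, inserting the explicit conformal factors $(1-e^{-1/(v-\delta)})^{4/(n-2)}$, and invoking the gluing statement (Theorem \ref{gluing}) that exploits a strict mean-curvature jump across a hypersurface where the two metrics agree --- which is why Theorem \ref{prelprop} is engineered to give $H_g>0$ on the boundary. If you insist on the Corvino--Schoen route, the cokernel of $DR^*_{g_t}$ on $\Omega$ is ($n+1$)-dimensional (spanned by approximate static potentials close to $x_1,\dots,x_{n+1}$), and disposing of it requires uniform-in-$t$ control of the projected problem with the kernel elements carried as explicit parameters; ``if the deformation has destroyed some of the static potentials there are simply fewer obstructions to meet'' is not an argument, since approximate kernel elements obstruct the estimates whether or not they are exact kernel elements. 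But these points are moot until the positivity $a>0$ is proved.
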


Taken together, these theorems show that the de Sitter space is far from being rigid.
In particular, there is no analogue of the positive mass theorem in the case $\Lambda>0$.

Each of the constant curvature KIDs above embeds into a  family of spherically symmetric KIDs of higher energy given by a mass par\-am\-eter $m>0$. 
Moreover, each KID in this family 
carries a compact, outermost minimal inner boundary $\Gamma\subset v^{-1}(0)$, which physically  represents an {\em event horizon} characteristic of a black hole solution.
The above considerations on rigidity  also make sense 
for the elements in this larger family, where now 
we additionally require that the eventual deformation should  preserve the  geometry (both intrinsic and extrinsic) of the horizon. In the asymptotically flat case ($\epsilon=0$), the (exterior) Schwarzschild space, which is the corresponding  KID, is rigid if $3\leq n\leq 7$, a result that follows from the sharp Penrose inequality established for spin manifolds in these dimensions by  Bray-Lee \cite{BrL}, following previous contributions by Bray \cite{Br} and Huisken-Ilmanen \cite{HI}. Thus, the putative rigidity of  Schwarzschild space remains unsettled in dimension $n\geq 8$, except in the graphical category, where it follows from the work of Lam \cite{L} and Huang-Wu \cite{HW}; see also \cite{dLG1}. 
In the asymptotically hyperbolic case $(\epsilon=-1)$, the corresponding KID is the  anti-de Sitter-Schwarzschild space, which is only known to be rigid in two cases.  First, rigidity holds in dimension $n=3$, a consequence of an estimate for a certain normalized volume due to Brendle-Chodosh \cite{BC}; for sufficiently small deformations, this also follows from the work of Ambrozio \cite{Am}, who established a sharp Penrose inequality in this setting. 
Also, it holds in the graphical case for any dimension, as confirmed  by the optimal Penrose inequality proved in \cite{dLG2}. 
Thus,  
the important question of whether the anti-de Sitter-Schwarzschild space is rigid remains wide open if  $n\geq 4$. We remark that similar rigidity statements in the asymptotically locally hyperbolic case follow from the Penrose-type inequalities   appearing in \cite{LN}, \cite{dLG3} and \cite{GWWX}.  

The main results of this note (Theorems \ref{main0} and \ref{main} below) address the  case $\epsilon=1$ in the presence of an event horizon and constitute the exact analogues of Theorems \ref{bmnfund} and \ref{bmndef} in this setting. Our arguments  actually apply to the class of generalized Kottler-de Sitter-Schwarzschild spaces whose cross sections  are compact rank one symmetric spaces with the (properly normalized) Fubini-Study metric,  
ensuring that any such KID fails to be rigid in any dimension $n\geq 3$ and for any admissible value of the mass parameter.
Hence, non-rigidity seems to be a characteristic feature in the $\Lambda >0$ regime. In particular, there is no analogue of the Penrose inequality in this case.  

As in \cite{BMN}, the proofs of Theorems \ref{main0} and \ref{main} rely on the construction of a test function satisfying a pair of inequalities involving the Jacobi operator of the two-component boundary $\partial M$. Once this is accomplished in Propositions \ref{funcchoice} and \ref{newfunc}, the proofs  are completed in Section \ref{proofmain}. 
%In Remark \ref{coda} we present a variant of Theorem \ref{main} which actually holds for any generalized K\"ottler-de Sitter-Schwarzschild space. 
Finally, in Section \ref{equalinf} we use the constructed metrics as building blocks to provide physically interesting examples of solutions of Einstein field equations with $\Lambda>0$ whose geometry is controlled at spatial infinity.  

\vspace{0.05cm}
\noindent
{\bf Acknowledgements.} The authors thank F. Marques and I. Nunes for valuable discussions.

\section{Statement of results}\label{state}
 
Here we present our main resuts and the class of generalized Kottler-de Sitter-Schwarz\-schild spaces for which they hold. 
We fix an integer $n\geq 3$ and a positive constant, the mass parameter, satisfying
\begin{equation}\label{masspar}
0<m<\frac{(n-2)^{(n-2)/2}}{n^{n/2}}.
\end{equation}
It is easy to check that there are constants $r_{\pm}=r_{\pm}^{(n,m)}$ satisfying
\begin{equation}\label{rrange}
0<r_-<r_*=\left(\frac{n-2}{n}\right)^{1/2}<r_+<1
\end{equation}
so that the function
\[
V(r)={1-r^2-\frac{2m}{r^{n-2}}}
\]
is positive   on the interior $\stackrel{\circ}{I}$ of the interval $I=[r_-,r_+]$ and vanishes on the extremities.  
Now consider  an $(n-1)$-dimensional compact manifold $(N,h)$. We require that $h$ is Einstein with positive scalar curvature, so that after a scaling we may assume that ${\rm Ric}_h=(n-2)h$. It follows that the metric 
\begin{equation}\label{metstat}
g_m=V^{-1}{dr\otimes dr}+r^2h, 
\end{equation}
defined on the manifold $M=\stackrel{\circ}{I}\times N$, has constant scalar curvature $R_{g_m}\equiv n(n-1)$. In fact, if we set $v=\sqrt{V}$ then $(M,g_m,v)$ is a KID.
Notice that if we  
trace  (\ref{statcond}) with respect to $g_m$ we get
\begin{equation}\label{tracestat}
\Delta_{g_m}v+nv=0.
\end{equation}

We still denote by $M$ the manifold 
$I\times N$, the closure of $\stackrel{\circ}{I}\times N$. 
The metric $g_m$ extends smoothly to a metric on this closure,  still denoted by $g_m$. 
The resulting Riemannian manifold $(M,g_m)$ is called a {\em generalized Kottler-de Sitter-Schwarzschild (gKdSS) space} of mass $m>0$. 
If $h$ is the round metric on the unit sphere $\mathbb S^{n-1}$ 
then $(M,g_m)$ is called the {\em de Sitter-Schwarzschild (dSS) space} of mass $m>0$. 
In particular, in the limiting case $m=0$ we recover the standard (round) metric on de Sitter space $\mathbb S^n_+$. Returning to the general case, notice that the induced metric on each slice 
$N_r=\{r\}\times N\subset M$ is $h_r=r^2h$. Moreover, the slice is a totally umbilical hypersurface, with the common principal curvature being ${v(r)}/r$.
In particular, the boundary $\partial M=N_{r_-}\cup N_{r_+}$ is totally geodesic. 
The boundary components $N_{r_-}$ and $N_{r_+}$ are respectively termed the {\em event} and {\em cosmological} horizons. 

Our results hold for a gKdSS space whose cross section $(N,h)$ is a compact rank one symmetric spaces (CROSS). This remarkable class of manifolds comprises the round spheres and certain spaces which can be represented as the base of a Riemannian submersion with totally geodesic fiber a sphere $\mathbb S^q$ of dimension $q\geq 0$ \cite{B} \cite{BoK} \cite{K}. We have $q=0$ for the real projective space $\mathbb RP^{l}$ of dimension $d=l$, $q=1$ for the complex projective space $\mathbb CP^{l}$ of dimension $d=2l\geq 4$, $q=3$ for the quaternionic projective space  $\mathbb HP^{l}$ of dimension $d=4l\geq 8$ and $q=7$ for the Cayley plane $\mathbb OP^2$ of dimension $d=16$. From this representation a CROSS inherits a natural (Fubini-Study) metric $h_{\rm can}$ with sectional curvature range $1\leq K_{h_{\rm can}}\leq 4$, except for the spheres and real projective spaces, which  satisfy $K_{h_{\rm can}}\equiv 1$. Moreover, these spaces are Einstein because they are irreducible as symmetric spaces. If we further normalize the metric so that 
\[
h=\beta^{-1}h_{\rm can}, \quad \beta=\beta_{d,q}=\frac{d-1}{3q+d-1}, \quad d=n-1,
\]   
then ${\rm Ric}_{h}=(d-1)h$.
Thus, with this normalization for the metric these spaces qualify as cross sections for a gKdSS space. 
Notice that $\beta=1$ in the exceptional cases (spheres and real projective spaces). 

Our goal here is to prove the following non-rigidity results.

\begin{theorem}\label{main0}
For each  $n\geq 3$ and for each value of the mass parameter as in (\ref{masspar}) the gKdSS space $M=I\times N$, where $(N,h)$ is a CROSS, carries a metric $\hat g$ with the following properties:
\begin{itemize}
\item $R_{\hat g}>n(n-1)$ everywhere;
\item ${\hat g}=g_m$ along $\partial M$.
\item $\partial M$ is totally geodesic with respect to $\hat g$. 
\end{itemize}
\end{theorem}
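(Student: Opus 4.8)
The plan is to follow the strategy of Brendle--Marques--Neves \cite{BMN}, adapting it to the two-component boundary $\partial M = N_{r_-} \cup N_{r_+}$ of the gKdSS space. I would look for a deformation of the form $\hat g = g_m + t \, \kappa$ for a small parameter $t$ and a symmetric $2$-tensor $\kappa$ that is supported away from $\partial M$ in the normal direction but is chosen so as to create positive scalar curvature to first order. More precisely, write the metric in the coordinates $(r,y) \in I \times N$ and consider a perturbation of the warping: replace $g_m = V^{-1} dr \otimes dr + r^2 h$ by a metric whose radial part is modified by a cutoff profile $f(r)$ vanishing to high order at $r_\pm$, together with a suitably chosen trace-adjusting term in the $N$-directions. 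The linearization of the scalar curvature map $g \mapsto R_g$ at $g_m$ produces a second-order operator applied to $f$, and the point is to arrange the sign of $R_{\hat g} - n(n-1)$ using the freedom in $f$ while keeping the boundary conditions $\hat g = g_m$ and $\partial M$ totally geodesic intact.

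The crux, exactly as in \cite{BMN}, is that the naive first-order perturbation cannot by itself make $R_{\hat g} > n(n-1)$ everywhere because $(M,g_m,v)$ is a KID, and equation (\ref{statcond}) --- equivalently (\ref{tracestat}) --- is precisely the cokernel obstruction to prescribing scalar curvature by linear perturbation. The way around this is the Brendle--Marques--Neves device: one introduces a test function $w$ on $M$, vanishing appropriately at the boundary, and plays the first-order gain in $R_g$ against a carefully engineered second-order correction, so that the net change in scalar curvature, after integrating against $v$, has a sign controlled by an integral inequality for $w$ relative to the operator $\Delta_{g_m} + n$. This is where Propositions \ref{funcchoice} and \ref{newfunc}, alluded to in the introduction, enter: they produce a function satisfying the requisite pair of differential inequalities involving the Jacobi-type operator on $M$ and its boundary behaviour. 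I would therefore reduce Theorem \ref{main0} to the existence of such a test function, and then invoke those propositions.

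The key steps, in order, are: (i) fix the explicit form of $V$ and of $g_m$ on $I \times N$, recording that both horizons $N_{r_\pm}$ are totally geodesic with $V(r_\pm) = 0$, $V'(r_\pm) \neq 0$; (ii) write down the two-parameter family of trial metrics $\hat g_{t,s}$, with $t$ governing a conformal-type factor built from the test function $w$ and $s$ governing a metric perturbation in the $N$-directions, arranged so that $\hat g_{t,s}$ agrees with $g_m$ to infinite order at $\partial M$; (iii) expand $R_{\hat g_{t,s}} - n(n-1)$ in $t$ and $s$, isolating the linear-in-$t$ term (which involves $\Delta_{g_m} w + n w$ plus boundary contributions via the second fundamental form of the slices $N_r$, namely $v(r)/r$), the linear-in-$s$ term, and the quadratic remainder; (iv) use the inequalities supplied by Propositions \ref{funcchoice} and \ref{newfunc} for $w$ to show that for an appropriate relation $s = s(t)$ and all small $t > 0$ the expansion is strictly positive pointwise; (v) check that the boundary of $M$ remains totally geodesic for $\hat g_{t,s}$, which follows because the perturbation and all its relevant derivatives vanish at $r = r_\pm$, so the second fundamental form of $\partial M$ is unchanged; (vi) conclude by picking $t$ small.

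The main obstacle is step (iv), i.e.\ showing that the test function delivered by the CROSS structure of $(N,h)$ actually satisfies the sign conditions needed at \emph{both} horizons simultaneously: unlike the single-boundary case of \cite{BMN}, here the profile on $I = [r_-, r_+]$ must be compatible with the geometry of the event \emph{and} the cosmological horizon, and the constants $\beta_{d,q}$ and the curvature normalization ${\rm Ric}_h = (n-2)h$ must be tracked carefully so that the integral inequality closes for every admissible mass $m$ in the range (\ref{masspar}) and every dimension $n \geq 3$. That verification is carried out in Propositions \ref{funcchoice} and \ref{newfunc}; granting those, the assembly of the deformation and the scalar curvature estimate is the routine part of the argument.
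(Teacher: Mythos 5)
Your proposal has a genuine gap, and in fact an internal contradiction, at the point where you handle the boundary. In step (ii) you arrange the trial metrics to agree with $g_m$ ``to infinite order at $\partial M$'' and in step (v) you conclude that $\partial M$ stays totally geodesic because ``the perturbation and all its relevant derivatives vanish at $r=r_\pm$.'' But the scalar curvature at a boundary point is determined by the $2$-jet of the metric there, so any metric agreeing with $g_m$ to that order satisfies $R_{\hat g}=R_{g_m}=n(n-1)$ \emph{on} $\partial M$, which is incompatible with the strict inequality $R_{\hat g}>n(n-1)$ everywhere that the theorem asserts. The paper's construction (following \cite{BMN}) resolves exactly this tension in two stages that your outline omits. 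First, Theorem \ref{prelprop} deforms $g_m$ via a vector field $X$ built from the boundary test function $\eta$ (with $X=\eta\nu_\pm$ and $(\nabla_{g_m})_{\nu_\pm}X=-\nabla_{h_{r_\pm}}\eta$), producing a metric $g$ with $R_g>n(n-1)$ and $g=g_m$ on $\partial M$ but with $H_g>0$: the boundary is deliberately \emph{not} totally geodesic, and the pointwise inequality $L_{r_\pm}\eta>0$ of Proposition \ref{newfunc} is used precisely to guarantee $\frac{\partial}{\partial t}H_{g(t)}|_{t=0}=L_{r_\pm}\eta>0$, while the integral inequality gives $\mu>0$ in the interior after the Fredholm correction. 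Second, Proposition \ref{rotsimmet} produces a conformal deformation $\tilde g=(1-tv^{3/2})g_m$ with $R_{\tilde g}>n(n-1)$ near $\partial M$, $\tilde g=g_m$ on $\partial M$, and $\partial M$ totally geodesic. The theorem then follows by applying the corner-smoothing/gluing result (Theorem \ref{gluing}) to $g$ and $\tilde g$, which is applicable exactly because of the mean curvature jump $H_g>0=H_{\tilde g}$; the resulting $\hat g$ agrees with $\tilde g$ near $\partial M$ (hence is totally geodesic there) and with $g$ away from it.

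Two smaller inaccuracies compound the problem. The test function produced by Propositions \ref{funcchoice} and \ref{newfunc} is a function on the boundary $\partial M=N_{r_-}\cup N_{r_+}$ satisfying inequalities for the Jacobi operator $L_{r_\pm}=-\Delta_{h_{r_\pm}}-{\rm Ric}_{g_m}(\nu_\pm,\nu_\pm)$ of the horizons as minimal hypersurfaces, not a function $w$ on $M$ tested against $\Delta_{g_m}+n$; the latter operator only enters when one solves $\Delta_{g_m}u+nu=Q-\mu$ with $u|_{\partial M}=0$ to absorb the oscillation of the second-order term. Also, the two-horizon issue you flag as the ``main obstacle'' is in fact the easy part: $N_{r_-}$ is strictly stable (since $d_{n,r_-}>0$ by (\ref{rrange})), so any positive constant works there, and all the work is concentrated on the cosmological horizon $N_{r_+}$. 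Without the mean-convex intermediate metric and the gluing step, your argument cannot produce the totally geodesic boundary together with the strict scalar curvature inequality.
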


\begin{theorem}
\label{main}
For each  $n\geq 3$ and for each value of the mass parameter as in (\ref{masspar}) the gKdSS space $M=I\times N$, where $(N,h)$ is a CROSS,
carries a metric $\hat g$ such that:
\begin{itemize}
 \item $R_{\hat g}\geq n(n-1)$, with the strict inequality holding somewhere;
 \item $\hat g=g_m$ in a neighborhood of $\partial M$.
\end{itemize} 
\end{theorem}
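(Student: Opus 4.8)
The plan is to follow the strategy of Brendle--Marques--Neves \cite{BMN}, exploiting that $(M,g_m,v)$ with $v=\sqrt{V}$ is a KID whose static potential vanishes \emph{exactly} along the totally geodesic two-component boundary $\partial M=N_{r_-}\cup N_{r_+}$. Let $DR_{g_m}$ denote the linearization of the scalar curvature at $g_m$. By the static equation (\ref{statcond}), $v$ spans the kernel of its formal adjoint $DR_{g_m}^{*}$, so for every symmetric $2$-tensor $K$ supported in the interior of $M$ one has $\int_M v\,DR_{g_m}(K)\,dV_{g_m}=0$; since $v>0$ there, $DR_{g_m}(K)$ cannot be nonnegative without vanishing identically. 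Hence no interior-supported first-order deformation can raise $R_{g_m}$, and the gain must be extracted from the quadratic term. I would therefore seek a perturbation $\hat g=g_m+tK$, with $t>0$ small and $DR_{g_m}(K)\equiv 0$, for which
\[
R_{\hat g}=n(n-1)+t^2\,Q_{g_m}(K,K)+O(t^3),
\]
where $Q_{g_m}$ is the quadratic term of the scalar curvature; the aim is to arrange $Q_{g_m}(K,K)\ge 0$ with strict inequality somewhere (adding, if necessary, a term $t^2L$ with $DR_{g_m}(L)$ correcting the sign, which is possible whenever $\int_M v\,Q_{g_m}(K,K)\,dV_{g_m}>0$ by a standard underdetermined-ellipticity argument). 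For Theorem \ref{main} one imposes in addition that $K$ — hence the entire deformation — vanish on a collar $\{0\le r\le\delta\}$ of $\partial M$, which is the feature distinguishing it from Theorem \ref{main0}.

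Everything then reduces to constructing $K$. Working in Fermi coordinates $(r,y)$ about each horizon, where $g_m=dr^2+g_r$ on $[0,\varepsilon)\times N$ with $\partial_r g_r|_{r=0}=0$, I would take $K=\varphi(r)\,k$ for a fixed symmetric $2$-tensor $k$ on the cross section (built from a function $f$ on $N$, e.g.\ from its Hessian, chosen compatibly with the isometries of the CROSS, since when $N$ is $\mathbb{CP}^l$, $\mathbb{HP}^l$ or $\mathbb{OP}^2$ the Fubini--Study metric $h$ is not of constant curvature and $k$ cannot be taken proportional to $h$) and a radial profile $\varphi$ vanishing on $\{r\le\delta\}$. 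Imposing that the linear term $DR_{g_m}(K)$ vanish turns into a second-order linear ODE for $\varphi$ whose coefficients encode the spectral behaviour of $k$ on $N$, while the positivity of the weighted quadratic term reduces, after integrating out the radial variable, to a pair of inequalities for a test function built from the Jacobi operator $L_{\partial M}=\Delta_{\partial M}+\operatorname{Ric}_{g_m}(\nu,\nu)$ of the two horizons — here $\operatorname{Ric}_{g_m}(\nu,\nu)$ is the constant $-(n-1)V'(r_\pm)/(2r_\pm)$ on $N_{r_\pm}$. Furnishing this test function is exactly the content of Proposition \ref{newfunc}; the weaker inequalities of Proposition \ref{funcchoice} suffice for Theorem \ref{main0}, where $K$ may reach $\partial M$ and a favorable boundary term appears in the integration by parts. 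With the test function, hence $k$ and $\varphi$, in place, the metric is assembled and $R_{\hat g}\ge n(n-1)$ is verified in Section \ref{proofmain} by fixing the cross-sectional data first, then $\delta$, and finally letting $t\to 0$.

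I expect the main obstacle to be the construction of the test function for Theorem \ref{main}. Forcing $K\equiv 0$ near $\partial M$ destroys the boundary term that in Theorem \ref{main0} contributes positively to the second variation, so the needed positivity must be squeezed out of a genuinely interior region; moreover the cutoff $\varphi$ loses factors of $\delta^{-1}$ and $\delta^{-2}$ under differentiation, so one must both strengthen the differential inequality the test function satisfies (the reason Proposition \ref{newfunc} is needed in place of Proposition \ref{funcchoice}) and take $\varphi$ vanishing to sufficiently high order at $r=\delta$, then order the limits — shrink $\delta$, then send $t\to 0$ — so that the cutoff errors and the cubic remainder are absorbed, while keeping the support of the correction $L$ still off $\partial M$. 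A secondary but genuine difficulty is that for the non-spherical CROSS the sign of the relevant quadratic form in $f$ has to be controlled using the curvature pinching $1\le K_{h_{\mathrm{can}}}\le 4$ together with the precise normalization $\operatorname{Ric}_h=(n-2)h$ fixed in Section \ref{state}; this is presumably where the hypothesis that $(N,h)$ be a CROSS, rather than a general positive Einstein manifold, genuinely enters.
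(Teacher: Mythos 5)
Your first paragraph correctly describes the mechanism behind Theorem \ref{prelprop} — the intermediate metric $g$ with $R_g>n(n-1)$, $g=g_m$ \emph{on} $\partial M$ and $H_g>0$ there — but your plan for upgrading this to $\hat g=g_m$ in a \emph{neighborhood} of $\partial M$, namely cutting the deformation tensor $K$ off on a collar and still extracting positivity from the quadratic term, has a genuine gap. In the paper the weighted second variation is given by (\ref{secder}): $\int_M Qv\,{\rm dvol}_{g_m}$ equals a sum of \emph{boundary} integrals $\int_{N_{r_\pm}}\eta L_{r_\pm}\eta\,{\rm dvol}_{h_{r_\pm}}$, precisely because the deformation $g_0(t)=g_m+t\Lc_Xg_m$ agrees to first order with the pullback $\varphi_t^*g_m$ and $g_m$ is a critical point of $\mathcal F$. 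If $X$ (equivalently $K$) is supported away from $\partial M$, then $\varphi_t$ is a diffeomorphism fixing a neighborhood of the boundary, $R_{\varphi_t^*g_m}\equiv n(n-1)$, the boundary terms disappear, and $\int_M Qv=0$; hence no $\mu>0$ exists and the correction $u$ solving $\Delta_{g_m}u+nu=Q-\mu$ cannot be found with the right sign. In other words, within the framework you set up — where all the positivity is supplied by the Jacobi-operator test function living on $\partial M$ — the needed gain cannot be ``squeezed out of a genuinely interior region'' by strengthening the test-function inequalities or by taking the radial profile flat to high order at the cutoff: the obstruction is structural, not quantitative. (Relatedly, you misassign the roles of the two propositions: Proposition \ref{funcchoice} produces the test function on the cosmological horizon $N_{r_+}$ only, Proposition \ref{newfunc} extends it to both boundary components using the strict stability of $N_{r_-}$, and \emph{both} Theorems \ref{main0} and \ref{main} consume Proposition \ref{newfunc} through Theorem \ref{prelprop}.)

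The paper's actual route to Theorem \ref{main} is non-perturbative and hinges on the mean-curvature gain $H_g>0$ on $\partial M$ from Theorem \ref{prelprop}. The metric $g$ is compressed into the slightly smaller region $M_{2\delta_\pm}$ by pulling back along the flow of the vector field $W$, which is conformal near $\partial M$ (Proposition \ref{conffield}) and Killing along it; a radially symmetric metric $\tilde g_{\delta_+}$ is built by hand which equals $g_m$ outside $M_{\delta_\pm}$ and has $R>n(n-1)$ in the transition collar (Proposition \ref{prop21}); the two metrics agree on $\partial M_{2\delta_\pm}$ (Proposition \ref{propcoinc}) and, for $\delta_+$ small, exhibit a mean-curvature jump of the favorable sign, so the Brendle--Marques--Neves gluing theorem (Theorem \ref{gluing}) produces $\hat g$ with $R_{\hat g}\ge n(n-1)$ agreeing with $g_m$ on $M\setminus M_{\delta_\pm}$. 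It is this corner-smoothing step — converting a mean-curvature jump across a hypersurface into a scalar-curvature gain in a collar — that replaces the interior positivity your scheme would require, and it is absent from your proposal.
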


As remarked in the Introduction, these examples rule out an analogue of the Penrose inequality in
the case of 
positive cosmological constant.

\begin{remark}
\label{inunes}
{\rm A recurrent theme in the proofs of Penrose-type inequalities in case $\Lambda\leq 0$ is the construction of suitable geometric foliations connecting the horizon and the sphere at infinity. This is partially motivated by the fact that the spherical slices in the corresponding KIDs are weakly stable as constant mean curvature surfaces.    
For instance, in \cite{Am} 
such a foliation is shown to exist by using  perturbative methods.
In sharp contrast with this, if $n=3$ it turns out that no such foliation by weakly stable constant mean curvature spheres exists for the metrics in Theorem \ref{main0}. The reason is that the horizons are critical for the Hawking mass, which is known to be strictly monotone along the foliation in the (non-vacuum) interior. Similar remarks also apply to the examples in Theorem \ref{main}. We thank I. Nunes for pointing out to us this amazing consequence of our results.}
\end{remark}

\section{Preliminary results}\label{prelim}

For $r_-\leq r\leq r_+$ we denote by $N_r$ the CROSS $N$ of dimension $d=n-1$ endowed with the metric $h_r=r^2h$, where $h=h_1=\beta^{-1}h_{\rm can}$ is the Fubini-Study metric normalized as in the previous section.

The case $m=0$ and $N=\mathbb S^{n-1} $ in Theorem \ref{main} has been treated in \cite{BMN} and a key role in their analysis is played by the Jacobi operator of the boundary, viewed as a minimal hypersurface. Accordingly, we consider
\[
L_{r_\pm}=-\Delta_{h_{r_\pm}}-{\rm Ric}_{g_m}(\nu_{\pm},\nu_{\pm}),
\]
where $\nu_{\pm}$ is the outward unit normal to $N_{r_\pm}\subset M$, $\Delta_{h_{r_\pm}}=r_\pm^{-2}\Delta_h$ is the Laplacian on $N_{r_\pm}$ and  we use here that $\partial M=N_{r_-}\cup N_{r_+}$ is totally geodesic.
From Gauss equation we have
\[
{\rm Ric}_g(\nu_{\pm},\nu_{\pm})=\frac{1}{2}\left(R_{g_m}-R_{h_{r_\pm}}\right).
\]
Using that $R_{g_m}=n(n-1)$ and $R_{h_{r_\pm}}=R_h/r_\pm^ 2=(n-1)(n-2)/r_{\pm}^2$ we end up with
\begin{equation}\label{estop}
L_{r_\pm}=-\Delta_{h_{r_\pm}}+d_{n,r_\pm}, \quad d_{n,r_\pm}=-\frac{n-1}{2}\left(n-\frac{n-2}{r^2_\pm}\right).
\end{equation}

If, as in \cite{BMN}, we take $m=0$ then $\stackrel{\circ}{I}=(0,1)$ and 
\[
L_1=-\Delta_h-(n-1).
\]
These authors  show the existence of a test function $\eta$
on $\mathbb S^{n-1}$ so that 
\begin{equation}\label{poss1} 
L_{1}\eta>0
\end{equation}
and 
\begin{equation}\label{poss2}
\int_{\mathbb S^{n-1}}\eta L_{1}\eta d{\rm vol}_h>0.
\end{equation}
By adapting their  method we prove here a similar result (see Proposition \ref{newfunc} below). The next proposition is the key step in this direction.

\begin{proposition}
\label{funcchoice}
For each CROSS $(N,h)$ of dimension $d=n-1\geq 2$ and for each $r_+$ as in (\ref{masspar}) there exists a function $\eta_{r_+}$ on $N_{r_+}$ so that 
\begin{equation}
\label{posit1}
L_{r_+}\eta_{r_+}>0
\end{equation}
and 
\begin{equation}\label{posit2}
\int_{N_{r_+}}\eta_{r_+} L_{r_+}\eta_{r_+} \,d{\rm vol}_{h_{r_+}}>0.
\end{equation}
\end{proposition}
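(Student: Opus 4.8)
The plan is to shed the scaling, reduce (\ref{posit1})--(\ref{posit2}) to the analogous statement for a single Schrödinger operator on the fixed unit model $(N,h)$, observe that this operator is \emph{invertible} (this is precisely where $m>0$, i.e. $r_+<1$, enters), and then obtain $\eta_{r_+}$ by prescribing a convenient right-hand side. First I would write, using $\Delta_{h_{r_+}}=r_+^{-2}\Delta_h$ and $d{\rm vol}_{h_{r_+}}=r_+^{\,d}\,d{\rm vol}_h$,
\[
L_{r_+}=r_+^{-2}\,\widetilde L,\qquad \widetilde L:=-\Delta_h-c,\qquad c=c_{n,r_+}:=\frac{n-1}{2}\bigl(nr_+^2-(n-2)\bigr),
\]
so that (\ref{posit1}) is equivalent to $\widetilde L\eta>0$ on $N$ and (\ref{posit2}) to $\int_N\eta\,\widetilde L\eta\,d{\rm vol}_h>0$. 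From $r_*<r_+<1$ in (\ref{rrange}) one gets $n-2<nr_+^2<n$, hence the pivotal bound
\[
0<c<n-1 .
\]
Since ${\rm Ric}_h=(n-2)h$ and $\dim N=n-1$, the Lichnerowicz estimate gives $\lambda_1(N,h)\ge n-1>c$; together with $c>0$ this shows that $c$ is not an eigenvalue of $-\Delta_h$, so $\widetilde L$ is a self-adjoint isomorphism of $C^\infty(N)$ with exactly one negative eigenvalue, namely $-c$, whose eigenspace is the constants.

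Granting this, I would take $\eta_{r_+}:=\widetilde L^{-1}q$ for a function $q>0$ on $N$ to be chosen below; then $\eta_{r_+}$ is smooth by elliptic regularity and $\widetilde L\eta_{r_+}=q>0$ settles (\ref{posit1}). It remains to render
\[
\int_N\eta_{r_+}\,\widetilde L\eta_{r_+}\,d{\rm vol}_h=\int_N q\,\widetilde L^{-1}q\,d{\rm vol}_h
\]
positive. Decomposing $q=\bar q+q'$ with $\bar q=|N|^{-1}\int_N q$ and $\int_N q'=0$, and using that $\widetilde L^{-1}$ acts by $-1/c$ on the constants and by the positive Green operator $(-\Delta_h-c)^{-1}$ on the mean-zero part, this integral equals
\[
-\frac{|N|\,\bar q^{\,2}}{c}+\int_N q'\,(-\Delta_h-c)^{-1}q'\,d{\rm vol}_h .
\]
The second term is positive, and since $0<\lambda_k-c<\lambda_k$ for every $k\ge 1$ it dominates $\int_N q'(-\Delta_h)^{-1}q'\,d{\rm vol}_h$. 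So the whole problem collapses to choosing $q>0$ with $\bar q$ under control but $\int_N q'(-\Delta_h)^{-1}q'\,d{\rm vol}_h$ as large as desired.

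This is exactly what the hypothesis $d=n-1\ge 2$ permits. I would fix $p_0\in N$ and a small $\delta>0$ and take $q=q_w:=H^N_{w^2}(p_0,\cdot)+\delta$, where $H^N_t$ denotes the heat kernel of $(N,h)$ (any smooth positive family concentrating at $p_0$ with fixed total mass serves as well). Then $q_w>0$ everywhere, $\bar q_w=|N|^{-1}+\delta$ is independent of $w$, while the spectral expansion of the heat kernel gives
\[
\int_N q_w'\,(-\Delta_h)^{-1}q_w'\,d{\rm vol}_h=\sum_{k\ge 1}\lambda_k^{-1}\,e^{-2\lambda_kw^2}\,\varphi_k(p_0)^2\;\longrightarrow\;\sum_{k\ge 1}\lambda_k^{-1}\varphi_k(p_0)^2=+\infty
\]
as $w\downarrow 0$, the divergence being the familiar failure of $\sum_k\lambda_k^{-1}$ to converge when $\dim N\ge 2$ (Weyl's law). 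Hence for $w$ small enough the integral in question is positive, and $\eta_{r_+}:=\widetilde L^{-1}q_w$ satisfies both (\ref{posit1}) and (\ref{posit2}). I expect this last step---certifying that the ``mass-type'' quantity $\int_N q\,\widetilde L^{-1}q$ can be forced positive---to be the only place that needs genuine care.

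One comment on the relation to \cite{BMN}. The route above is available only because $c<n-1$ \emph{strictly}, equivalently $r_+<1$, equivalently $m>0$: in the degenerate round case $c=n-1$ one has $c=\lambda_1(\mathbb S^{n-1})$, the operator $-\Delta-(n-1)$ has the first eigenspace in its kernel and cannot be inverted, and \cite{BMN} must instead produce an explicit test function realizing (\ref{poss1})--(\ref{poss2}); the spectral gap $0<c<n-1$ present here genuinely simplifies the argument. If an explicit $\eta_{r_+}$ were preferred one could equally take it radial about a point of the CROSS, using that a CROSS is two-point homogeneous so that $\widetilde L$ acts on functions of the distance through an explicit one-dimensional operator, and solve the corresponding ODE with a concentrated source---the same construction made concrete.
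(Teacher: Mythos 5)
Your argument is correct, and it takes a genuinely different route from the paper's. The paper follows \cite{BMN} closely: it builds an explicit test function $\psi=\sum_{k=0}^{p}a_{2k}f^{2k}$ with $f=\cos\sqrt{\beta}s$, chooses the coefficients recursively so that $L_{r_+}\psi$ collapses to a single nonnegative multiple of $f^{2p}$, certifies the positivity of $\int\psi L_{r_+}\psi$ by showing via Gauss's test that an associated series $\sum_k b_k$ diverges (so that $p$ may be taken large), and finally sets $\eta_{r_+}=\psi-c$ for a small constant $c>0$. You instead exploit the spectral gap $0<c<n-1\le\lambda_1(N,h)$ furnished by Lichnerowicz, which is exactly what $r_*<r_+<1$, i.e.\ $m>0$, buys: the operator $\widetilde L=-\Delta_h-c$ is then invertible with a single negative direction along the constants, so one may prescribe $\widetilde L\eta=q>0$ and reduce everything to making $\int_N q\,\widetilde L^{-1}q$ positive, which you achieve by concentrating $q$ at a point and using the blow-up of the regularized on-diagonal Green's function in dimension $d\ge 2$. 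This is cleaner and more conceptual---in particular it isolates why the $m=0$ hemisphere case of \cite{BMN} is genuinely harder, since there $c=\lambda_1(\mathbb S^{n-1})$ and the inversion is unavailable---at the cost of producing a non-explicit $\eta_{r_+}$. Two minor remarks: (i) the divergence you actually need is that of $\sum_{k\ge1}\lambda_k^{-1}\varphi_k(p_0)^2$, not merely of $\sum_k\lambda_k^{-1}$; this is the singularity of the Green's function of $-\Delta_h$ on the diagonal and follows from the local Weyl law (or, on a CROSS, from homogeneity of $\sum_{\varphi\in E_\lambda}\varphi^2$ together with Weyl's law), so the claim is true but the parenthetical justification should be tightened; (ii) the proposition and its later uses require only the two displayed inequalities, so the fact that your $\eta_{r_+}$ may change sign and is not radial causes no trouble downstream.
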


\begin{proof}
Except for the spheres, with the given normalization a CROSS has diameter equal to $\pi/2\sqrt{\beta}$. Also, for each $0<s<\pi/2\sqrt{\beta}$ it is known that the geodesic sphere of radius $s$ centered at some point has principal curvatures given by $2\sqrt{\beta}\cot 2\sqrt{\beta}s$ and $\cot \sqrt{\beta}s$, with multiplicities $q$ and $d-q-1$, respectively. Therefore, if $\phi=\phi(s)$ is a radially symmetric function, its Laplacian is 
\begin{eqnarray*}
\Delta_h\phi & = & \ddot\phi+\sqrt{\beta}\left((d-q-1)\cot \sqrt{\beta}s+2q\cot 2\sqrt{\beta}s\right)\dot \phi\\
& = & \ddot\phi+\sqrt{\beta}\left((e-1)\cot \sqrt{\beta}s-\frac{2q}{\sin 2\sqrt{\beta}s}\right)\dot \phi, \quad e= d+q,
\end{eqnarray*}
where the dot means derivative with respect to $s$.
In particular, if we take $\phi=g(f)$, where $f(s)=\cos \sqrt{\beta}s$ and
$g:[0,1]\to\mathbb R$ is smooth, we obtain
\[
\beta^{-1}\Delta_h\phi=(1-f^2)g''(f)+\left(\frac{q}{f}-ef\right)g'(f), 
\] 
where the prime denotes derivatives with respect to  $f$. Hence, by taking 
$g(f)=f^{2k}$ we obtain the relation
\begin{equation}\label{key}
-\Delta_h f^{2k}=\lambda^{(h)}_{k}f^{2k}-2k(2k-1+q)\beta f^{2k-2},
\end{equation} 
where $\lambda^{(h)}_k=2k(2k-1+e)\beta$ is the $k^{\rm th}$ eigenvalue of $\Delta_h$\footnote{If $N=\mathbb S^{n-1}$ then $\lambda^{(h)}_k$ is the $2k^{\rm th}$ eigenvalue of $\Delta_h$.}. Also, if $g(f)=f$ we have
\[
\Delta_hf+\beta\left(ef-\frac{q}{f}\right)=0,
\]
which together with $f^2+\beta^{-1}|\nabla_hf|^2=1$ implies 
\begin{equation}\label{reccross}
\int_{N}f^{\alpha+2}{\rm dvol}_h=\frac{\alpha+1+q}{\alpha+1+e}\int_{N}f^{\alpha}{\rm dvol}_h.
\end{equation}

We next consider the function
\[
\psi=\sum_{k=0}^pa_{2k}f^{2k},
\]
where $a_{2k}\in\mathbb R$ and $p\geq 1$ will be specified later. 
A direct computation using (\ref{key}) yields
\begin{eqnarray*}
L_{r_+}\psi & = & \left(\frac{\lambda_{p}^{(h)}}{r_+^2}+d_{n,r_+}\right)a_{2p}f^{2p}\\
& & \quad + \sum_{k=0}^{p-1}\left[
\left(\frac{\lambda_{k}^{(h)}}{r_+^2}+d_{n,r_+}\right)a_{2k}-\beta
\frac{(2k+q+1)(2k+q+2)}{r_+^2}a_{2k+2}
\right]f^{2k}.  
\end{eqnarray*}
We now observe that $\lambda_0^{(h)}/r^2_++d_{n,r_+}=d_{n,r_+}<0$ whereas for $k\geq 1$, 
\[
\frac{\lambda_{k}^{(h)}}{r^2_+}+d_{n,r_+}\geq\frac
{\lambda_{1}^{(h)}}{r^2_+}+d_{n,r_+}=\frac
{2(e+1)\beta}{r^2_+}+d_{n,r_+}\geq 
\frac
{d}{r^2_+}+d_{n,r_+}
>0.
\] 
If $N=\mathbb S^{n-1}$ then $d=n-1$ is the first eigenvalue of the Laplacian, so the estimate holds true as well.
Thus, if we set
$
a_0=-1
$
and 
\begin{equation}\label{recur1ext} a_{2k+2}=\beta^{-1}\frac{\lambda_{k}^{(h)}+r^2_+d_{n,r_+}}{(2k+q+1)
(2k+q+2)}a_{2k}, \quad k\geq 0, 
\end{equation}
then  $a_{2k}>0$ for $k\geq 1$ and 
\[
L_{r_+}\psi  = \left(\frac{\lambda_{p}^{(h)}}{r_+^2}+d_{n,r_+}\right)a_{2p}f^{2p}\geq 0.
\]

On the other hand, 
\[
\int_{N_{r_+}}\psi L_{r_+}\psi{\rm dvol}_{h_{r_+}}=r^{n-1}_+
\left(\frac{\lambda_{p}^{(h)}}{r_+^2}+d_{n,r_+}\right)a_{2p}\sum_{k=0}^pa_{2k}\int_{N}f^{2k+2p}{\rm dvol}_h.
\]
But (\ref{recur1ext}) implies 
\[
a_{2k}=\beta^{-k+1}\left(\Pi_{i=1}^{k-1}
\frac{\lambda^{(h)}_{i}+r_+^2d_{n,r_+}}{(2i+q+1)(2i+q+2)}\right)a_2,
\]
and (\ref{reccross})
gives
\[
\int_{N}f^{2p+2k}{\rm dvol}_h=\left(
\Pi_{j=1}^k\frac{2p+2j+q-1}{2p+2j+e-1}\right)
\int_{N}f^{2p}{\rm dvol}_h.
\]
Hence,
\begin{eqnarray*}
\int_{N_{r_+}}\psi L_{r_+}\psi{\rm dvol}_{h_r}
& = & 
r^{n-1}_+
\left(\frac{\lambda_{p}^{(h)}}{r_+^2}+d_{n,r_+} \right) \times\\
& & \quad \times
a_{2p}\left(-1+a_2\sum_{k=1}^p\Omega_{k,p}\right)\int_{N}f^{2p}{\rm dvol}_h,
\end{eqnarray*}
where $a_2=-\beta^{-1}r^2_+d_{n,r_+}/(q+1)(q+2)>0$ and
\[ 
\Omega_{k,p}=
\beta^{-k+1}\Pi_{i=1}^{k-1}
\frac{\lambda^{(h)}_{i}+r_+^2d_{n,r_+}}{(2i+q+1)(2i+q+2)}
\,\,\Pi_{j=1}^k 
\frac{2p+2j+q-1}{2p+2j+e-1}.
\] 
Even though $a_2\to 0$ as $r_+\to r_*$, we claim that for each $r_+$ as above, $p$ can be chosen large enough (depending on $n$) so that 
\begin{equation}\label{claim2}
\sum_{k=1}^p\Omega_{k,p}>a_2^{-1}.
\end{equation}
This gives 
\[
\int_{N_{r_+}}\psi L_{r_+}\psi d{\rm vol}_{h_{r_+}}>0,
\]
which allows us to complete the proof by taking $\eta_{r_+}=\psi-c$, where $c>0$ is sufficiently small.

It remains to prove (\ref{claim2}). 
Since $r_+^2d_{n,r_+}> -d$ we have $\Omega_{k,p}\geq b_kc_{k,p}$, where 
\[
b_k=\beta^{-k+1}\Pi_{i=1}^{k-1}
\frac{2i(2i+e-1)\beta-d}{(2i+q+1)(2i+q+2)},
\]
and 
\[
c_{k,p}=\Pi_{j=1}^k\frac{2p+2j+q-1}{2p+2j+e-1}.
\]
Since the sequence
\[
j\mapsto \frac{2p+2j+q-1}{2p+2j+e-1}
\]
is strictly increasing 
we obtain
$
c_{k,p}\geq \gamma_{p}^p
$,
where 
\[
\gamma_{p}=\frac{2p+q+1}{2p+
e+1}<1.
\]
Hence,  
\begin{equation}\label{crucest}
\sum_{k=1}^p\Omega_{k,p}\geq \gamma_{p}^p\sum_{k=1}^pb_k.
\end{equation}
But  
\begin{eqnarray*}
\frac{b_{k+1}}{b_k} & = & \beta^{-1}\frac{2k(2k+e-1)\beta-d}{(2k+q+1)
(2k+q+2)}
\\
& =  &  1-\frac{\alpha}{k}+O\left(\frac{1}{k^2}\right),
\end{eqnarray*}
where
\[
\alpha=\frac{4-e+q}{2}=\frac{5-n}{2}\leq 1.
\]
Thus, $\sum_kb_k=+\infty$ by Gauss test \cite[Theorem 2.12]{BK}.
Since
\[
\lim_{p\to +\infty}\gamma_{p}^p=\exp\left(\frac{q-e}{2}\right),
\]
(\ref{claim2}) follows from (\ref{crucest}).

The remaining case, $N=\mathbb S^{n-1}$, is obtained by repeating the argument above with $q=0$ and $f(s)=\cos s$, $0\leq s\leq \pi$.
\end{proof}

\begin{proposition}
\label{newfunc}
Under the conditions of Proposition \ref{funcchoice}    there exists a function $\eta$ on $\partial M$ so that 
\begin{equation}
\label{posit11}
L_{r_+}\eta>0\,\,\,{\rm on}\,\,\,N_{r_+}, \quad L_{r_-}\eta>0\,\,\,{\rm on}\,\,\,N_{r_-},
\end{equation}
and 
\begin{equation}\label{posit22}
\int_{N_{r_+}}\eta L_{r_+}\eta \,d{\rm vol}_{h_{r_+}}>0, \quad \int_{N_{r_-}}\eta L_{r_-}\eta \,d{\rm vol}_{h_{r_-}}>0. 
\end{equation}
\end{proposition}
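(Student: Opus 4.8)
The plan is to deduce Proposition \ref{newfunc} from Proposition \ref{funcchoice} together with an elementary observation about the operator $L_{r_-}$ on the event horizon. The crucial structural point is that $\partial M=N_{r_-}\cup N_{r_+}$ is a disjoint union of two closed hypersurfaces, so prescribing a function $\eta$ on $\partial M$ amounts to prescribing, independently, a function on each of $N_{r_-}$ and $N_{r_+}$, with no compatibility condition linking the two.

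On the cosmological horizon $N_{r_+}$ I would simply take $\eta=\eta_{r_+}$, the function supplied by Proposition \ref{funcchoice}; this at once yields $L_{r_+}\eta>0$ on $N_{r_+}$ and $\int_{N_{r_+}}\eta L_{r_+}\eta\,d{\rm vol}_{h_{r_+}}>0$, i.e. the $N_{r_+}$-halves of (\ref{posit11}) and (\ref{posit22}). On the event horizon $N_{r_-}$ the situation is in fact simpler, owing to the sign of the zeroth-order term of $L_{r_-}$: since $r_-<r_*=\left(\frac{n-2}{n}\right)^{1/2}$ by (\ref{rrange}), we have $\frac{n-2}{r_-^2}>n$, hence
\[
d_{n,r_-}=-\frac{n-1}{2}\left(n-\frac{n-2}{r_-^2}\right)>0,
\]
in contrast with $d_{n,r_+}<0$. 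Thus $L_{r_-}=-\Delta_{h_{r_-}}+d_{n,r_-}$ has a strictly positive potential, and taking $\eta\equiv1$ on $N_{r_-}$ gives $L_{r_-}\eta=d_{n,r_-}>0$ and $\int_{N_{r_-}}\eta L_{r_-}\eta\,d{\rm vol}_{h_{r_-}}=d_{n,r_-}\,{\rm vol}_{h_{r_-}}(N_{r_-})>0$. Assembling these two choices produces the desired $\eta$ on $\partial M$.

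There is no real obstacle in this argument; the only point worth emphasizing is that the defining constraint $r_-<r_*$ is precisely what forces $d_{n,r_-}>0$, so that the constant function already does the job on $N_{r_-}$ and the delicate polynomial-in-$f$ construction of Proposition \ref{funcchoice} is needed only on the cosmological horizon, where $d_{n,r_+}<0$. One could alternatively rerun that construction on $N_{r_-}$, starting the recursion (\ref{recur1ext}) from $a_0=+1$ rather than $a_0=-1$, but this is superfluous.
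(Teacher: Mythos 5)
Your proof is correct and follows essentially the same route as the paper: take $\eta=\eta_{r_+}$ on $N_{r_+}$ and a positive constant on $N_{r_-}$, where the sign condition $d_{n,r_-}>0$ coming from $r_-<r_*$ in (\ref{rrange}) makes the constant function work. The paper phrases the $N_{r_-}$ step as strict stability of the event horizon (positivity of the quadratic form for every nonzero $\varphi$), but the substance is identical to your observation about the positive potential of $L_{r_-}$.
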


\begin{proof}
It follows from (\ref{rrange}) that 
$d_{n,r_-}>0$, 
so we see that for {\em any} nonzero function $\varphi$ on $N_{r_-}$ there holds
\[
\int_{N_{r_-}}\varphi L_{{r_-}}\varphi d{\rm vol}_{h_{r_-}}>0.
\]
Of course, this just expresses the well-known fact that,  viewed as a minimal hypersurface, $N_{r_-}$ is strictly stable. Thus, if we define  $\eta=\eta_{r_+}$ on $N_{r_+}$ (as in Proposition \ref{funcchoice}) and $\eta$ to be any constant, positive function on $N_{r_-}$, the result follows.
\end{proof}

To proceed we set
\[
\kappa_r=\frac{(n-2)m}{r^{n-1}}-r. 
\] 
It is easy to check that $|\nabla_{g_m}v|=|\kappa_r|$ on $N_{r}$ and  $|k_{r_\pm}|>0$. Hence,
\begin{equation}\label{corrfunct}
\nabla_{g_m} v=-|\kappa_{r_\pm}|\nu_\pm\quad \,\,\,{\rm on}\,\,\, N_{r_\pm}.
\end{equation}
We next consider the functional that to each metric $g$ on $M$ associates
\[
\mathcal F(g)=\int_{M}R_{g}v{\rm dvol}_{g_m}+2|\kappa_{r_-}|{\rm Area}(N_{r_-},g) + 2|\kappa_{r_+}|{\rm Area}(N_{r_+}, g). 
\]
A computation as in the proof of \cite[Proposition 9]{BMN}, which in our setting uses (\ref{statcond}) and (\ref{corrfunct}), shows that 
\[
\frac{d}{dt}\mathcal F(g(t))|_{t=0}=0,
\]
for any variation $t\in (-\epsilon,\epsilon)\mapsto  g(t)$ of $ g_m=g(0)$. 
In words, $g_m$ is a critical point for $\mathcal F$.
With this and (\ref{posit11})-(\ref{posit22}) 
at hand, 
a rather straightforward adaptation of the methods in \cite{BMN} 
provides the following important result. Here and in the following, $H_g$ denotes  the mean curvature of $\partial M$ with respect to a given ambient metric $g$.

\begin{theorem}
\label{prelprop}
For each  $n\geq 3$ and for each value of the mass parameter as in (\ref{masspar}) the gKdSS space $M=I\times N$, where $(N,h)$ is a CROSS,
carries a metric $g$ on $M$ with the following properties:
\begin{itemize}
\item $R_g>n(n-1)$ everywhere;
\item $g=g_m$ on $\partial M$;
\item $H_g>0$ on $\partial M$.
\end{itemize}
\end{theorem}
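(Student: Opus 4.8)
\textbf{Proof proposal for Theorem \ref{prelprop}.}

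The plan is to adapt the interpolation/gluing argument from \cite{BMN} (their Propositions 8 and 9 and the surrounding discussion) to the two-boundary gKdSS setting, using Proposition \ref{newfunc} as the replacement for the inequalities (\ref{poss1})--(\ref{poss2}) that drive their construction. The functional $\mathcal F$ has already been identified above as having $g_m$ as a critical point, with the boundary integrands weighted precisely by the $|\kappa_{r_\pm}|$ coming from (\ref{corrfunct}); this is the correct Lagrangian because the first variation of $\mathrm{Area}(N_{r_\pm},g)$ against a normal perturbation reproduces the mean-curvature term, while the bulk term reproduces $\tfrac12 R_g v$ after integrating by parts and invoking (\ref{statcond})/(\ref{tracestat}). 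The strategy is then to compute the \emph{second} variation of $\mathcal F$ at $g_m$ in a conformal-plus-trace-free direction adapted to $\eta$, show it is strictly positive thanks to (\ref{posit11})--(\ref{posit22}), and conclude that moving off $g_m$ in that direction strictly increases $\mathcal F$ while keeping the constraint $R_g = n(n-1)$ to first order; a further correction restores $R_g \ge n(n-1)$ with strict inequality somewhere, and the sign of the area terms then forces $H_g > 0$ on each component of $\partial M$.

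In more detail, I would proceed as follows. First, fix a collar $[r_-,r_-+\delta)\times N \;\cup\; (r_+-\delta,r_+]\times N$ of $\partial M$ and, following \cite{BMN}, build a one-parameter family $g(t)$ of metrics equal to $g_m$ outside the collar, whose boundary behavior is governed by $\eta$: concretely, perturb the second fundamental form of $N_{r_\pm}$ in the direction $\eta$ so that $H_{g(t)} = t\,\eta + O(t^2)$ on $\partial M$, while simultaneously adjusting the interior so that $R_{g(t)} = n(n-1) + o(t)$. Second, expand $\mathcal F(g(t))$ to second order: since $g_m$ is critical, $\mathcal F(g(t)) = \mathcal F(g_m) + \tfrac{t^2}{2}Q(\eta) + o(t^2)$, and the computation of \cite[Proposition 9]{BMN}, carried out separately on each boundary component, identifies $Q(\eta)$ with a positive multiple of $\sum_\pm \int_{N_{r_\pm}} \eta\, L_{r_\pm}\eta\, d\mathrm{vol}_{h_{r_\pm}}$ up to terms controlled by $L_{r_\pm}\eta$ itself; by (\ref{posit11})--(\ref{posit22}) this is strictly positive, so $\mathcal F(g(t)) > \mathcal F(g_m)$ for small $t \ne 0$. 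Third, use this strict gain to run the standard argument: if the perturbed metric had $R_{g(t)} \le n(n-1)$ everywhere and $H_{g(t)} \le 0$ on $\partial M$, then by definition of $\mathcal F$ we would get $\mathcal F(g(t)) \le \mathcal F(g_m)$ (using that the areas of the minimal — hence area-minimizing to second order — boundary slices cannot increase), a contradiction. A conformal change $\hat g = u^{4/(n-2)} g(t)$ solving the appropriate Yamabe-type equation, exactly as in \cite[Section 5]{BMN}, then upgrades this to a metric with $R_{\hat g} > n(n-1)$ strictly, $\hat g = g_m$ on $\partial M$, and $H_{\hat g} > 0$ on $\partial M$, which is the assertion.

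The main obstacle is the bookkeeping in the second-variation computation: one must verify that the ``cross terms'' in the Hessian of $\mathcal F$ — those mixing the bulk scalar-curvature perturbation with the boundary area perturbation, and those coming from the $|\kappa_{r_\pm}|$ weights being different at the two ends — assemble precisely into the operators $L_{r_\pm}$ of (\ref{estop}) with the right signs, so that (\ref{posit11})--(\ref{posit22}) can be applied. In \cite{BMN} this is done for a single boundary component with $m=0$, where $|\kappa| = 1$ and $r_+ = 1$; here the weights $|\kappa_{r_\pm}| = |(n-2)m r_\pm^{1-n} - r_\pm|$ are genuine constants that must be tracked through the variation, and one must check that the identity $|\nabla_{g_m}v| = |\kappa_r|$ together with (\ref{corrfunct}) makes the first-variation term vanish and feeds the correct coefficient into the second-variation term on \emph{each} of $N_{r_-}$ and $N_{r_+}$. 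Once this is confirmed — and the structure of $\mathcal F$ strongly suggests it will be, since the weights were chosen for exactly this purpose — the remainder is a routine transcription of the \cite{BMN} machinery, with Proposition \ref{newfunc} supplying on both horizons what (\ref{poss1})--(\ref{poss2}) supplied on the single de Sitter horizon.
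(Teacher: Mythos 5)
Your overall framing is aligned with the paper: the proof does hinge on the criticality of $\mathcal F$ at $g_m$ and on identifying a second-order quantity with $\sum_{\pm}\int_{N_{r_\pm}}\eta L_{r_\pm}\eta\,d{\rm vol}_{h_{r_\pm}}$, with Proposition \ref{newfunc} supplying positivity on both horizons. But the passage from that integrated inequality to the three \emph{pointwise} conclusions is where the actual content lies, and your mechanism for it does not work. First, you propose to arrange $H_{g(t)}=t\,\eta+O(t^2)$ and read off $H>0$ from that; this fails because $\eta$ is \emph{not} positive (it cannot be: $L_{r_+}=-\Delta_{h_{r_+}}+d_{n,r_+}$ has $d_{n,r_+}<0$, so a positive function with $L_{r_+}\eta>0$ everywhere and $\int\eta L_{r_+}\eta>0$ would have to be nonconstant and change sign — indeed the $\eta_{r_+}$ of Proposition \ref{funcchoice} is built with $a_0=-1$). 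The correct mechanism is that $\eta$ enters as the normal \emph{speed} of a deformation: one builds a vector field $X$ with $X=\eta\nu_\pm$ and $(\nabla_{g_m})_{\nu_\pm}X=-\nabla_{h_{r_\pm}}\eta$ on $N_{r_\pm}$, sets $g_0(t)=g_m+t\Lc_Xg_m$, and then the first variation of the mean curvature is the Jacobi operator applied to $\eta$, i.e.\ $\partial_tH_{g(t)}|_{t=0}=L_{r_\pm}\eta>0$ by (\ref{posit11}). It is $L_{r_\pm}\eta$, not $\eta$, that must be (and is) positive.

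Second, your step three is logically insufficient for $R_g>n(n-1)$ \emph{everywhere}: knowing $\mathcal F(g(t))>\mathcal F(g_m)$ only controls an integral, and the contrapositive you invoke (``if $R\le n(n-1)$ and $H\le 0$ then $\mathcal F\le\mathcal F(g_m)$'') would at best show the desired inequalities hold somewhere; moreover the area terms in $\mathcal F$ depend on the induced boundary metric (which is held fixed), not on $H$, so the implication as stated is not even the right accounting. The paper converts integrated positivity into pointwise positivity differently: with $Q=\partial_t^2R_{g_0(t)}|_{t=0}$ and $\mu=\int_MQv/\int_Mv>0$ (positivity being exactly (\ref{secder}) plus (\ref{posit22})), one solves $\Delta_{g_m}u+nu=Q-\mu$, $u|_{\partial M}=0$ — solvable by the Fredholm alternative precisely because $v$ spans the relevant Dirichlet kernel by (\ref{tracestat}) and $Q-\mu$ is $L^2(v\,d{\rm vol})$-orthogonal to it by the choice of $\mu$ — and then sets $g(t)=g_0(t)+\tfrac{1}{2(n-1)}t^2ug_m$, which forces $\partial_t^2R_{g(t)}|_{t=0}=Q-(\Delta_{g_m}u+nu)=\mu>0$ as a \emph{constant}, hence $R_{g(t)}>n(n-1)$ everywhere for small $t>0$ while leaving $g_m$ untouched on $\partial M$. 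Your proposed final conformal change solving a Yamabe-type equation would generically destroy both $g=g_m$ on $\partial M$ and the sign of $H$, so it cannot substitute for this step.
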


\begin{proof}
Since the detailed computation can be found in \cite{BMN}, here we merely sketch the proof. 

Surprisingly enough, the metric $g$ is constructed as a deformation of the static metric $g_m$. We first use  the test function $\eta$ in Proposition \ref{newfunc} to construct a vector field $X$ on $M$ so that 
\begin{equation}
\label{etaboun}
X=\eta\nu_{\pm},\quad ({\nabla}_{g_m})_{\nu_\pm}X=-\nabla_{h_{r_\pm}}\eta\quad {\rm on}\quad N_{r_\pm}. 
\end{equation} 
Notice that $\Lc_Xg_m=0$ along $\partial M$. We then consider the deformations of $g_m$ given by
\begin{equation}\label{twodefor}
g_0(t)=g_m+t\Lc_Xg_m, \quad g_1(t)=\varphi_t^*(g_m),
\end{equation}
where $\varphi_t$ is the flow generated by $X$. 
A computation using (\ref{twodefor}) and the fact that $g_m$ is critical for $\mathcal F$ shows that the function
\[
Q=\frac{\partial^2}{\partial t^2}R_{g_0(t)}|_{t=0}
\]
satisfies 
\begin{equation}\label{secder}
\int_M Qv{\rm dvol}_{g_m}=2\left(\int_{N_{r_-}}\eta L_{r_-}\eta {\rm dvol}_{h_{r_-}}+\int_{N_{r_+}}\eta L_{r_+}\eta{\rm dvol}_{h_{r_+}}\right), 
\end{equation}
so that 
\begin{equation}\label{mupos}
\mu=\frac{\int_M Qv{\rm dvol}_{g_m}}{\int_M v{\rm dvol}_{g_m}}>0.
\end{equation}
By (\ref{tracestat}) and Fredholm alternative we find a function $u$ on $M$ such that $\Delta_{g_m}u+nu=Q-\mu$ and $u|_{\partial M}=0$. The sought-after deformation, which is given by  
\[
g(t)=g_0(t)+\frac{1}{2(n-1)}t^2ug_m, 
\]
clearly preserves $g_m$ along $\partial M$. Moreover, 
it satisfies
\begin{equation}
\label{easycheck1}
\frac{\partial}{\partial t}R_{g(t)}|_{t=0}=0,\quad
\frac{\partial^2}{\partial t^2}R_{g(t)}|_{t=0}=Q-(\Delta_{g_m}u+nu)=\mu>0\quad {\rm on}\quad M,
\end{equation}
and 
\begin{equation}
\label{easycheck2}
\frac{\partial}{\partial t}H_{g(t)}|_{t=0}=L_{r_\pm}\eta>0 \quad {\rm on}\quad \partial M.
\end{equation}
This finishes the proof.
\end{proof}

\section{The proofs of Theorems \ref{main0} and \ref{main}}
\label{proofmain}

Theorem  \ref{prelprop} already goes a long way toward proving Theorems \ref{main0} and \ref{main}. The next  ingredient 
is an useful gluing result proved in \cite{BMN}.

\begin{theorem}
\label{gluing}\cite{BMN}
Let $M$ be a compact manifold of dimension n with boundary $\partial M$,
and let $g$ and $\tilde g$ be two smooth Riemannian metrics on M such that  $g=\tilde g$ along
$\partial M$. Moreover, assume that $H_g>H_{\tilde g}$ at each point
on $\partial M$. Given any real number $\varepsilon>0$ and any neighborhood $U$ of $\partial M$, there
exists a smooth metric $\hat g$ on M with the following properties:
\begin{itemize}
 \item $R_{\hat g}(x)\geq \min\{R_g(x),R_{\tilde g}(x)\}-\varepsilon$, for any $x\in M$;
 \item $\hat g$ agrees with $g$ outside $U$.
 \item $\hat g$ agrees with $\tilde g$ in a neighborhood of $\partial M$.
\end{itemize}
\end{theorem}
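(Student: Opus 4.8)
The plan is to localize the whole construction to a collar of $\partial M$ and to reduce it to a one–dimensional ``bending'' problem in which the strict inequality $H_g>H_{\tilde g}$ plays exactly the role that a mean–convex corner plays when one smooths a metric across a hypersurface (as in Miao's work). First I would fix $\delta>0$ small enough that the closed collar $C_\delta=[0,2\delta]\times\partial M$ lies inside $U$, and use Fermi coordinates for $g$ to write $g=dr^2+g_r$ on $C_\delta$, where $g_r$ is a smooth family of metrics on $\partial M$ with $g_0=g|_{\partial M}$. Since $g|_{\partial M}=\tilde g|_{\partial M}$, after composing with a collar diffeomorphism fixing $\partial M$ I may likewise write $\tilde g=dr^2+\tilde g_r$ with $\tilde g_0=g_0$. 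Letting $e^{\omega_g(r,\cdot)}$ and $e^{\omega_{\tilde g}(r,\cdot)}$ denote the volume densities of $g_r$ and $\tilde g_r$ relative to $g_0$, the mean curvature of $\partial M$ with respect to the outward normal $-\partial_r$ equals $-\partial_r\omega_g|_{r=0}$, resp.\ $-\partial_r\omega_{\tilde g}|_{r=0}$; hence the hypothesis $H_g>H_{\tilde g}$ becomes the strict pointwise inequality $\partial_r\omega_{\tilde g}|_{0}>\partial_r\omega_g|_{0}$ on $\partial M$, and by compactness of $\partial M$ there is $c_0>0$ with $\partial_r\omega_{\tilde g}|_{0}-\partial_r\omega_g|_{0}\ge c_0$.

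Next I would produce $\hat g=dr^2+\hat g_r$ on $C_\delta$ by choosing a path $r\mapsto\hat g_r$ of metrics on $\partial M$ with $\hat g_r=\tilde g_r$ for $r\le\delta/3$ and $\hat g_r=g_r$ for $r\ge 2\delta/3$, and $\hat g=g$ on the rest of $M$. The starting point is the radial identity
\[
R_{\hat g}=R_{\hat g_r}-|\hat A_r|_{\hat g_r}^{2}-\hat H_r^{2}-2\,\partial_r\hat H_r,\qquad
\hat A_r=\tfrac12\partial_r\hat g_r,\quad \hat H_r=\operatorname{tr}_{\hat g_r}\hat A_r=\partial_r\omega_{\hat g}.
\]
The key observation is that the first three terms depend only on $\hat g_r$ and its first $r$–derivative, whereas the term that is \emph{forced} to vary quickly through the transition is $-2\partial_r\hat H_r$: since $\hat H_r$ must pass from $\approx\partial_r\omega_{\tilde g}|_0$ to $\approx\partial_r\omega_g|_0$ across an interval of length $\sim\delta$, one has $\partial_r\hat H_r=O(1/\delta)$, but by the inequality above this change is a net \emph{decrease}, so the path can be chosen with $\hat H_r$ nonincreasing, whence $-2\partial_r\hat H_r\ge 0$ throughout (and of size $\sim 1/\delta$, concentrated where the bending takes place). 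One then exploits the remaining freedom in the path — e.g.\ a conformal adjustment of $\hat g_r$, or a warping of the normal–normal component — to keep the other three terms from dropping below $\min\{R_g,R_{\tilde g}\}-\varepsilon$: shrinking $\delta$ makes $\hat g_r$ and $\partial_r\hat g_r$ converge to their $g$– and $\tilde g$–side values, and the favorable, $O(1/\delta)$–large term absorbs the bounded discrepancies that arise precisely in the bending layer. A final mollification near the junctions $r=\delta/3,\,2\delta/3$ makes $\hat g$ genuinely smooth; since $C_\delta\subset U$ and $\hat g$ equals $g$ for $r\ge 2\delta/3$ and equals $\tilde g$ for $r\le\delta/3$, the last two conclusions hold automatically.

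The main obstacle is exactly the scalar–curvature estimate through the bending layer: one must exhibit a concrete path $r\mapsto\hat g_r$ along which $-2\partial_r\hat H_r\ge 0$ while $R_{\hat g_r}-|\hat A_r|^{2}-\hat H_r^{2}$ never falls more than $\varepsilon$ below $\min\{R_g,R_{\tilde g}\}$, and one must control the mollification at the junctions so that no term of size $\gg1$ with the wrong sign is reintroduced there. The gap $c_0>0$ is the slack that makes this balancing possible, so strictness of $H_g>H_{\tilde g}$ (together with compactness of $\partial M$) is used in an essential way; this is the entire content of the lemma. Everything else — the passage to Fermi coordinates, the radial curvature identity, and the fact that $\hat g$ inherits the two boundary agreements — is routine.
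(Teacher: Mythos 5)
You should first note that the paper does not prove this statement at all: it is quoted verbatim from [BMN] (their gluing theorem) and used as a black box, so there is no in-paper argument to compare against, and your proposal has to be judged against the result itself. Your framework is sound: the radial identity $R_{\hat g}=R_{\hat g_r}-|\hat A_r|_{\hat g_r}^{2}-\hat H_r^{2}-2\partial_r\hat H_r$ is correct, the translation of $H_g>H_{\tilde g}$ (outward normal) into $\partial_r\omega_{\tilde g}|_{0}-\partial_r\omega_{g}|_{0}\ge c_0>0$ is correct, and the mechanism you isolate --- the bending layer forces $\hat H_r$ to drop by $\approx c_0$ over a length $\sim\delta$, producing a favorable term of size $c_0/\delta$ that must absorb $O(1)$ errors in the remaining terms --- is precisely the mechanism underlying the proof in [BMN] (who package it as a direct estimate of the scalar curvature of an explicit cutoff interpolation rather than via the radial decomposition).

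The gap is that the one step you explicitly defer \emph{is} the theorem. You must exhibit a path $r\mapsto\hat g_r$ with $\hat g_r=\tilde g_r$ for $r\le\delta/3$ and $\hat g_r=g_r$ for $r\ge 2\delta/3$ such that, \emph{pointwise} on the bending layer, $-2\partial_r\hat H_r\ge c\,c_0/\delta$ wherever the discrepancy in $R_{\hat g_r}-|\hat A_r|^2-\hat H_r^2$ fails to be $o(1)$. The difficulty is that $\hat H_r$ is not a free function: it equals $\operatorname{tr}_{\hat g_r}(\tfrac12\partial_r\hat g_r)$, and for the naive convex combination $\hat g_r=(1-\phi(r/\delta))\tilde g_r+\phi(r/\delta)g_r$ the term $\delta^{-1}\phi'(g_r-\tilde g_r)=O(1)$ in $\hat A_r$ is comparable to $A^{(g)}_0-\tilde A_0$ and does \emph{not} shrink as $\delta\to 0$, while $\partial_r\hat H_r$ acquires a contribution $\sim\delta^{-2}\phi''\operatorname{tr}_{\hat g_r}(g_r-\tilde g_r)\sim -2\delta^{-1}s\,\phi''(s)\,c_0$, whose sign follows $\phi''$ and hence cannot be single-signed for a cutoff; so the asserted monotonicity of $\hat H_r$ is not automatic and must be engineered. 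The missing idea is a way to decouple the fast variation into a single scalar degree of freedom --- for instance $\hat g_r=u(r,\cdot)^{4/(n-2)}k_r$ with $k_r$ a slow interpolation of $\tilde g_r$ and $g_r$ and the entire drop of $\hat H_r$ loaded into $\partial_r\log u$, keeping $u=1+O(\delta)$ so that $R_{\hat g_r}$, the trace-free part of $\hat A_r$, and $\hat H_r$ stay within $o(1)$ of their endpoint values while $-2\partial_r\hat H_r\gtrsim c_0/\delta$ throughout --- together with the corresponding estimates. Until that is written down, the first bullet of the conclusion is not established. Two smaller points: normalizing $\tilde g$ by a collar diffeomorphism $\Phi$ means your $\hat g$ agrees near $\partial M$ with $\Phi^*\tilde g$ rather than with $\tilde g$ itself (harmless for the applications in this paper, but not the literal statement); and if you build the interpolation from cutoffs all of whose derivatives vanish at the junctions, no final mollification is needed, so your second ``obstacle'' disappears.
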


\begin{remark}
\label{regul}
{\rm Theorem \ref{gluing} can be recast as a regularity result for metrics admitting a corner along a hypersurface which preserves a given lower bound for the scalar curvature. More precisely, let $M$ be a smooth manifold and $\Sigma\subset M$ a two-sided compact hypersurface, so that $\Sigma$ divides $M$ into two domains, say $\Omega_i$, $i=1,2$. Let $g_i$ be a metric on $\Omega_i$ which extends smoothly to $\overline \Omega_i=\Omega_i\cup \Sigma$ and satisfies $R_{g_i}>c$ for some $c\in\mathbb R$. Assume further that $g_1=g_2$ on $\Sigma$ and that the mean curvature undergoes a jump as one crosses $\Sigma$, which means that $|H_{g_1}-H_ {g_2}|>0$, where both mean curvatures are computed with respect to unit normals along $\Sigma$ pointing toward the {\em same} direction. Under these conditions, Theorem \ref{gluing} implies the existence of a smooth metric $\hat g$ on $M$ satisfying $R_{\hat g}>c$ and agreeing with $g_i$ outside an arbitrarily small neighborhood of $\Sigma$.}
\end{remark}

We also need a suitable metric on $M$ to compare with the metric $g$ in Theorem \ref{prelprop}.

\begin{proposition}
\label{rotsimmet}
Any gKdSS space $M$ carries a metric $\tilde g$ such that 
$R_{\tilde g}>n(n-1)$
in a neighborhood
of $\partial M$, $\tilde g=g_m$ along $\partial M$ and $\partial M$ is totally geodesic with respect to $\tilde g$.
\end{proposition}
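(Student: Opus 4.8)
The plan is to build $\tilde g$ explicitly as a rotationally symmetric (warped-product-type) perturbation of $g_m$ supported near each boundary component, so that the totally geodesic condition is preserved while the scalar curvature is pushed strictly above $n(n-1)$. Since $g_m = V^{-1}dr\otimes dr + r^2 h$ and $V(r_\pm)=0$, it is convenient to work with the geodesic distance $t$ to $\partial M$; writing $g_m = dt\otimes dt + \phi(t)^2 h$ near each horizon, the event and cosmological horizons correspond to a smooth even extension of $\phi$ through $t=0$ with $\dot\phi(0)=0$ (this is exactly what makes $g_m$ extend smoothly and $N_{r_\pm}$ totally geodesic). I would then look for $\tilde g = dt\otimes dt + \psi(t)^2 h$ with $\psi$ a small perturbation of $\phi$, again even in $t$ so that $\dot\psi(0)=0$, which automatically gives $\tilde g = g_m$ on $\partial M$ and $\partial M$ totally geodesic with respect to $\tilde g$.

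The key computation is the scalar curvature of such a warped product: for $\tilde g = dt\otimes dt + \psi(t)^2 h$ with $(N,h)$ Einstein, $\mathrm{Ric}_h = (n-2)h$, one has
\[
R_{\tilde g} = -2(n-1)\frac{\ddot\psi}{\psi} - (n-1)(n-2)\frac{\dot\psi^2}{\psi^2} + (n-1)(n-2)\frac{1}{\psi^2}.
\]
For $\psi=\phi$ this equals $n(n-1)$ by the static condition; so I would write $\psi = \phi + \varepsilon\chi$ with $\chi$ a fixed smooth even bump function supported in a small neighborhood of $t=0$, expand to first order in $\varepsilon$, and show the linearization of $R_{\tilde g}$ at $\phi$ applied to $\chi$ can be made strictly positive on the support of $\chi$. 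The linearized operator is essentially $-2(n-1)(\ddot\chi/\phi - (\ddot\phi/\phi^2)\chi) + (\text{lower order})$, a second-order ODE operator in $t$; by a suitable choice of $\chi$ — for instance taking $\chi$ to be, near $t=0$, a multiple of a function whose second derivative has a definite sign, then tapering off — one arranges the first variation to be positive where $\chi\ne 0$. Then for $\varepsilon>0$ small the full (nonlinear) $R_{\tilde g} > n(n-1)$ holds on that neighborhood by continuity, while outside the support of $\chi$ one has $\tilde g = g_m$ and hence $R_{\tilde g}=n(n-1)$ there; shrinking slightly gives the strict inequality in a genuine neighborhood of $\partial M$ as claimed.

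The main obstacle I anticipate is the sign of the first variation: because $\chi$ must be even with $\chi(0)$ finite and must vanish (to high order) away from the horizon, its second derivative $\ddot\chi$ necessarily changes sign, so one cannot simply make the leading term positive everywhere by a crude choice. The fix is to exploit the freedom in the zeroth-order terms and in $\chi(0)$ versus $\dot\chi$ near $t=0$: on a CROSS (or sphere) cross section the coefficient $(n-1)(n-2)/\phi^2$ contributes a definite-sign zeroth-order term when $\chi>0$, and by choosing $\chi$ positive, monotone decreasing in $t\ge 0$, with controlled second derivative, the negative contributions of $\ddot\chi$ can be dominated; alternatively one may perturb not only the warping factor but also add a conformal factor $\tilde g = (1+\varepsilon w)(dt\otimes dt) + \psi(t)^2 h$ to gain an extra free function, at the cost of a slightly longer computation of $R_{\tilde g}$. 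Either way, once the linearized positivity is secured on the interior of the support, the passage to the nonlinear statement and the verification of the boundary conditions are routine, so I would present the warped-product ansatz, record the scalar-curvature formula, carry out the one-parameter expansion, and conclude.
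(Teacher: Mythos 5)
Your warped-product ansatz is a legitimate alternative to the paper's argument (the paper instead uses the single conformal deformation $\tilde g(t)=(1-tv^{3/2})g_m$, whose first variation of scalar curvature, $(n-1)\bigl(\tfrac{3}{4}v^{-1/2}\kappa_r^2-\tfrac{n}{2}v^{3/2}\bigr)$, blows up positively at the horizons because $|\nabla_{g_m}v|=|\kappa_{r_\pm}|\neq 0$ there, while the factor $v^{3/2}$ leaves the boundary data untouched). However, your write-up has a concrete error at the boundary: evenness of $\psi$ gives $\dot\psi(0)=0$, hence the totally geodesic condition, but it does \emph{not} give $\tilde g=g_m$ along $\partial M$. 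For that you need $\psi(0)=\phi(0)$, i.e.\ $\chi(0)=0$; yet in the second half you explicitly take $\chi$ positive at $t=0$, which replaces the induced metric $r_\pm^2h$ on the horizon by $(r_\pm+\varepsilon\chi(0))^2h$ and violates the second conclusion of the proposition.

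The proposed ``fix'' for the sign of the first variation also cannot work. Linearizing your formula at $\phi$ gives
\[
L\chi=-\frac{2(n-1)}{\phi^{\,n-1}}\frac{d}{dt}\bigl(\phi^{\,n-2}\dot\chi\bigr)+c\,\chi,\qquad
c=-\frac{(n-1)(n-2)(1-\dot\phi^2)}{\phi^3}-\frac{n(n-1)}{\phi}<0,
\]
where I used $R(\phi)=n(n-1)$ and $1-\dot\phi^2=1-V>0$. In particular the zeroth-order contribution of $(n-1)(n-2)/\psi^2$ is $-2(n-1)(n-2)\chi/\phi^3$, which is \emph{negative} when $\chi>0$, so it cannot dominate the bad contributions of $\ddot\chi$; moreover, integrating $\tfrac{\phi^{n-1}}{2(n-1)}L\chi$ over the support of $\chi$ (the boundary terms vanish since $\dot\chi(0)=0$ and $\chi$ is compactly supported) yields $\int c\,\phi^{n-1}\chi\le 0$ for $\chi\ge 0$, so a nonnegative bump can never have $L\chi>0$ throughout its support: the obstruction you sensed is real. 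The repair lies elsewhere: the proposition only requires $R_{\tilde g}>n(n-1)$ in \emph{some} neighborhood of $\partial M$, so it suffices to take $\chi(0)=\dot\chi(0)=0$ and $\ddot\chi(0)<0$ (hence $\chi\le 0$ near $0$, consistent with the integral identity); then $L\chi(0)=-2(n-1)\ddot\chi(0)/\phi(0)>0$, positivity propagates to a one-sided interval $[0,\delta)$ by continuity, and the passage to the nonlinear statement for small $\varepsilon$ goes exactly as you describe. With that modification (and discarding the extra conformal factor, which would spoil the clean verification of the boundary conditions), your approach does prove the proposition.
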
 

\begin{proof}
We try to find $\tilde g$  along a radially symmetric conformal deformation of the type $\tilde g(t)=(1-tv^{3/2})g_m$. In a neighborhood $U$ of $\partial M$ we have 
\begin{eqnarray*}
\frac{\partial}{\partial t}R_{\tilde g(t)}|_{t=0} & = & (n-1)\left(\Delta_{g_m}v^{3/2}+nv^{3/2}\right)\\
& =  & (n-1)\left(\frac{3}{4}v^{-1/2}\kappa_r^2-\frac{n}{2}v^{3/2}\right).
\end{eqnarray*} 
Thus, $R_{\tilde g(t)}>n(n-1)$ if both $U$ and $t>0$ are chosen sufficiently small.

It is obvious that $\tilde g=g_m$ on $\partial M$. Also, if  $b_{\tilde g}$ (respectively, $b_{g_m}$) denotes the second fundamental form of the slice $N_r\subset U$ with respect to $\tilde g$ (respectively, $g_m$) we compute that 
\[
b_{\tilde g} = b_{g_m}+\frac{3}{4}\frac{tv^{1/2}\kappa_r}{\sqrt{1-tv^{3/2}}}
g_m.
\] 
By sending $r\to r_\pm$ we find that $b_{\tilde g}=0$ on $\partial M$.
\end{proof}

The proof of Theorem \ref{main0} is now completed by  
applying Theorem \ref{gluing} to the metric $\tilde g$ of Proposition \ref{rotsimmet} and the metric $g$ constructed in Theorem \ref{prelprop}. 
 
The proof of Theorem \ref{main} relies on Proposition \ref{prelprop}, Theorem \ref{gluing} and the existence of a vector field $W$ on $M$ which always points outward along $\partial M$ and has the additional properties of being  conformal in a neighborhood of $\partial M$ and Killing along $\partial M$. 
In order to exhibit such a vector field, we start by considering
\[
Y=rv(r)\frac{\partial}{\partial r}.
\] 
Notice that $Y$ never vanishes because $|Y|_{g_m}=r$. 

\begin{proposition}
\label{conffield}
The vector field $Y$ is conformal, that is, 
\begin{equation}
\label{conffieldeq}
\mathcal L_Yg_m=2vg_m.
\end{equation}
\end{proposition}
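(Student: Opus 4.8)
The plan is to compute $\mathcal L_Y g_m$ directly in the coordinates $(r,\theta)$ in which $g_m = V^{-1}dr\otimes dr + r^2 h$, using that $Y = rv(r)\,\partial_r$ with $v=\sqrt{V}$, hence $Y = r\sqrt{V}\,\partial_r$. Since $Y$ is a multiple of $\partial_r$, the Lie derivative splits cleanly along the two blocks of the metric, and no genuinely geometric input is needed beyond a careful bookkeeping of the coefficient functions.

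\textbf{Step 1: the spherical block.} On $N$-directions $\mathcal L_Y(r^2 h) = Y(r^2)\,h = r\sqrt{V}\cdot 2r\, h = 2r\sqrt{V}\cdot r h$, which is exactly $2v\cdot(r^2 h)$, the $h$-part of $2vg_m$. This uses only that $h$ does not depend on $r$, so $\mathcal L_Y h = 0$ in the $N$-directions.

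\textbf{Step 2: the radial block.} Writing $a(r) = r\sqrt{V}$ so that $Y = a\,\partial_r$, one has $\mathcal L_Y(dr) = d(\mathcal L_Y r) = d(a) = a'\,dr$, hence $\mathcal L_Y(dr\otimes dr) = 2a'\,dr\otimes dr$, and therefore $\mathcal L_Y(V^{-1}dr\otimes dr) = \big(Y(V^{-1}) + 2a'V^{-1}\big)dr\otimes dr = \big(-aV^{-2}V' + 2a'V^{-1}\big)dr\otimes dr$. To match $2v\cdot V^{-1}dr\otimes dr = 2\sqrt{V}\,V^{-1}dr\otimes dr$, the required identity is $-aV' V^{-1} + 2a' = 2\sqrt{V}$. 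Substituting $a = r\sqrt V$ gives $a' = \sqrt V + r V'/(2\sqrt V)$, so $2a' = 2\sqrt V + rV'/\sqrt V$ and $-aV'/V = -r\sqrt V\,V'/V = -rV'/\sqrt V$; these cancel to leave exactly $2\sqrt V = 2v$, as desired. (Note this is a pointwise identity in $r$ valid on $\stackrel{\circ}{I}$ wherever $V>0$, which is all of the interior; smoothness up to $\partial M$ then follows since $Y$ and $g_m$ extend smoothly there.)

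\textbf{Obstacle.} There is no real obstacle: the only thing to be careful about is the chain rule for $a' = (r\sqrt V)'$ and the sign of the $Y(V^{-1})$ term, and the fact that the computation is legitimate across the horizons even though $v$ vanishes there — one checks the conformal factor $2v$ simply vanishes on $\partial M$, consistent with $Y$ being tangent to $\partial M$ there (indeed $Y$ vanishes nowhere but $\mathcal L_Y g_m = 0$ on $\partial M$, recovering the earlier observation that the slices are totally umbilic with umbilicity factor $v/r$). So the proof is a two-line verification of the ODE identity $2(r\sqrt V)' - r\sqrt V\,V'/V = 2\sqrt V$, organized as in Steps 1 and 2 above.
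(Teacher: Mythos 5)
Your computation is correct and follows essentially the same route as the paper, which merely packages the radial block more compactly by applying Cartan's formula to the one-form $dr/v$ (so that $\mathcal L_Y(dr/v)=dr$) instead of expanding $(r\sqrt V)'$ by the chain rule. One small slip in your closing parenthetical: $Y$ is orthogonal, not tangent, to $\partial M$ (it equals $rv\,\partial_r$ with $|Y|_{g_m}=r\neq 0$ there, and the paper later uses it precisely as an outward-pointing field along $\partial M$); the vanishing of $\mathcal L_Yg_m$ on $\partial M$ simply reflects that the conformal factor $2v$ vanishes there, not any tangency.
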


\begin{proof}
By Cartan's formula, 
\[
\mathcal L_Y\left(\frac{dr}{v}\right)=dr,
\]
and hence
\[
\mathcal L_Y\left(\frac{dr\otimes dr}{v^2}\right)=2\frac{dr\otimes dr}{v}.
\]
Thus, 
\begin{eqnarray*}
\mathcal L_Y g_m & = & \mathcal L_Y\left(\frac{dr\otimes dr}{v^2}\right)+ Y(r^2)h\\
& = & 2 \frac{dr\otimes dr}{v} +2r^2vh\\
& =  & 2vg_m,
\end{eqnarray*}
as desired. 
\end{proof}

Notice that, as the proof above makes it clear, the conformal character of $Y$ holds true irrespective of the nature of $v=v(r)$.

We now fix a radially symmetric neighborhood $U_\pm$ of $N_{r_\pm}$ and take a cut-off function $\psi_\pm$ which holds 1  on $U_\pm$ and $0$ on $U_\mp$. We then consider the vector field $W=(\psi_+-\psi_-)Y$. Thus, $W$ is conformal on $U=U_+\cup U_-$ and  points outward along $\partial M$.
Moreover, $\Lc_Wg_m=0$ on $\partial M$. Thus, $W$ has all the properties listed above.

In the following we denote by $\Psi_t$ the flow of $W$ at time $t$. Notice that for each $t>0$ small enough, $\Psi_t$ carries a radially symmetric region of the type $\{r_-^t\leq r\leq r_+^t\}$ onto $M$. Here, $r_-<r_-^t<r_+^t<r_+$. 

Given $\delta_+,\delta_->0$ small enough, we define $r_{i\delta_\pm}$, $i=1,2,3$, by the conditions:
\begin{itemize}
\item  $r_-< r_{\delta_-}<r_{2\delta_-}<r_{3\delta_-}<r_*<r_{3\delta_+}< r_{2\delta_+}<r_{\delta_+}<r_+$; 
\item  $v(r_{i\delta_\pm})=i\delta_\pm$. 
\end{itemize}
We also set 
$M_{i\delta_\pm}=\{r_{i\delta_-}\leq r\leq r_{i\delta_+}\}$. 
If $\delta_+>0$ is small enough there exists $\delta_->0$ and
$t_{\delta_+}>0$ so that 
$\Psi_{t_{\delta_+}}$ carries $M_{2\delta_\pm}$ onto $M$. It is clear that $\delta_-\to 0$ and $t_{\delta_+}\to 0$ as $\delta_+\to 0$. 
For each such $\delta_+>0$ we define a metric $\tilde g_{\delta_+}$ on $M$ satisfying:
\[
\tilde g_{\delta_+}=\left\{\begin{array}{lcr}
g_m & {\rm on} & M\setminus \stackrel{\circ}{M}_{\delta_\pm}\\
(1-e^{-\frac{1}{v-\delta_+}})^{\frac{4}{n-2}}g_m &{\rm on} & 
\{r_{3\delta_+}<r< r_{\delta_+}\}\\
(1-e^{-\frac{1}{v-\delta_-}})^{\frac{4}{n-2}}g_m &{\rm on} & 
\{r_{\delta_-}<r< r_{3\delta_-}\}
\end{array}
\right. 
\]  
Then, as in \cite[Proposition 21]{BMN}, we have

\begin{proposition}
\label{prop21}
If $\delta_+>0$ is small enough then $R_{\tilde g_{\delta_+}}>n(n-1)$ in $\stackrel{\circ}{M}_{\delta_\pm} \setminus  \stackrel{\circ}{M}_{3\delta_\pm}$.
\end{proposition}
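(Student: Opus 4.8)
The plan is to reduce Proposition \ref{prop21} to a one-dimensional ODE computation for the radially symmetric conformal factors appearing in the definition of $\tilde g_{\delta_+}$, exactly as in \cite[Proposition 21]{BMN}. The metric $\tilde g_{\delta_+}$ has the form $u^{4/(n-2)}g_m$ on each of the two transition regions $\{r_{3\delta_+}<r<r_{\delta_+}\}$ and $\{r_{\delta_-}<r<r_{3\delta_-}\}$, where $u=u(r)$ is the radial function $1-e^{-1/(v-\delta_\pm)}$ and $v=v(r)=\sqrt{V(r)}$. The first step is to recall the conformal transformation law for scalar curvature: for $\tilde g = u^{4/(n-2)}g_m$ one has $R_{\tilde g} = u^{-(n+2)/(n-2)}\left(-\tfrac{4(n-1)}{n-2}\Delta_{g_m}u + R_{g_m}u\right)$, and since $R_{g_m}=n(n-1)$, the inequality $R_{\tilde g}>n(n-1)$ is equivalent to
\[
-\frac{4(n-1)}{n-2}\Delta_{g_m}u + n(n-1)u > n(n-1)u^{(n+2)/(n-2)},
\]
i.e. to $\Delta_{g_m}u < \tfrac{n(n-2)}{4}\left(u - u^{(n+2)/(n-2)}\right) = \tfrac{n(n-2)}{4}u\left(1-u^{4/(n-2)}\right)$. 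Since $0<u<1$ on the transition region, the right-hand side is strictly positive, so it suffices to show that $\Delta_{g_m}u$ is small (in fact, we will see it is negative up to a controllably small error) when $\delta_+$ is small.

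Next I would compute $\Delta_{g_m}u$ for a radial function. Using the static structure $g_m = v^{-2}dr^2 + r^2 h$ and the usual formula for the Laplacian of a radial function, $\Delta_{g_m}u = v^2 u'' + \left(v v' + (n-1)\tfrac{v^2}{r}\right)u'$, where $' = d/dr$. Now write $u = 1-e^{-1/(v-\delta_\pm)}$, set $w = v-\delta_\pm$, and note that on the transition region $w$ ranges over a fixed compact subinterval of $(0,\infty)$ bounded away from $0$ (namely $w\in[\delta_+,3\delta_+]$ or $[\delta_-,3\delta_-]$ translated appropriately — more precisely $w$ runs between $\delta_\pm$ and $3\delta_\pm$... wait, on $\{r_{3\delta_+}<r<r_{\delta_+}\}$ we have $\delta_+<w<... $) — in any case $w$ stays in a region where $e^{-1/w}$ and all its derivatives in $w$ are bounded. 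The key point, as in \cite{BMN}, is that the dominant term in $\Delta_{g_m}u$ as $\delta_+\to 0$ comes from differentiating the rapidly varying exponential, and since $e^{-1/w}$ is convex-decreasing in the relevant range one extracts a definite sign: the leading contribution to $\Delta_{g_m}u$ carries a favorable (negative) sign, while all remaining terms are $O(\delta_+)$ or smaller relative to it. I would carry this out by a direct substitution, tracking the powers of the small parameter, and conclude $\Delta_{g_m}u<0 < \tfrac{n(n-2)}{4}u(1-u^{4/(n-2)})$ throughout $\stackrel{\circ}{M}_{\delta_\pm}\setminus\stackrel{\circ}{M}_{3\delta_\pm}$ once $\delta_+$ is small enough (which forces $\delta_-$ small as well).

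The main obstacle I anticipate is the bookkeeping at the endpoints of the transition intervals, where $u$ and its derivatives must match smoothly onto the constant value $1$ (where $\tilde g_{\delta_+}=g_m$) without spoiling the strict inequality — in particular one must check that the estimate is uniform up to $r=r_{3\delta_\pm}$ and $r=r_{\delta_\pm}$, where the rapidly-varying exponential either nearly vanishes or is bounded below. This is precisely the delicate point handled in \cite[Proposition 21]{BMN}, and since our $g_m$, $v$, and the geometry of the slices $N_r$ differ from the round case only through the harmless factors $r^2 h$ and the precise form of $V(r)$ — none of which affects the structure of the radial ODE — the argument transplants verbatim, with $n(n-1)$ in place of $n(n-1)$ and the CROSS cross section playing no role since $u$ is radial. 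Hence the proof is completed exactly as in \cite{BMN}, and we omit the remaining elementary estimates.
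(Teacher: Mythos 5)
Your overall strategy --- the conformal transformation law, the reduction to $\Delta_{g_m}u<\tfrac{n(n-2)}{4}u\bigl(1-u^{4/(n-2)}\bigr)$ with $0<u<1$, and then showing $\Delta_{g_m}u\leq 0$ on the transition annuli --- is the right adaptation of \cite[Proposition 21]{BMN}, which is all the paper itself offers (it states the proposition with no proof beyond that citation). But the one place where something must actually be verified is left as an assertion, and the surrounding claims contain errors suggesting the mechanism has not been pinned down. First, on $\{r_{3\delta_+}<r<r_{\delta_+}\}$ the quantity $w=v-\delta_+$ ranges over $(0,2\delta_+)$: it is \emph{not} bounded away from $0$, and $w\to 0^+$ at $r_{\delta_+}$ is precisely how $u=1-e^{-1/w}$ glues $C^\infty$ to the constant $1$ there. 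Also $e^{-1/w}$ is convex \emph{increasing} on $(0,1/2)$, so $u$ is concave decreasing; the sign you actually need is $F''(w)=-e^{-1/w}(1-2w)w^{-4}<0$ for $w<1/2$, where $u=F(v)$.

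Second, the decisive estimate. Writing $\Delta_{g_m}u=F'(v)\,\Delta_{g_m}v+F''(v)\,|\nabla_{g_m}v|^2$ and using the two identities the paper sets up for exactly this purpose --- the traced static equation $\Delta_{g_m}v=-nv$ from (\ref{tracestat}) and $|\nabla_{g_m}v|=|\kappa_r|$ with $|\kappa_{r_\pm}|>0$ from the discussion around (\ref{corrfunct}) --- one gets
\[
\Delta_{g_m}u=e^{-1/w}\left(\frac{nv}{w^2}-\frac{(1-2w)\,\kappa_r^2}{w^4}\right),\qquad w=v-\delta_\pm .
\]
Here the positive term equals $nvw^2\leq 12n\delta_\pm^3$ times $e^{-1/w}w^{-4}$, while the negative term is at least $\tfrac12\kappa_r^2$ times the same factor, and $\kappa_r^2$ is bounded below near the horizons; hence $\Delta_{g_m}u<0$ once $\delta_+$ is small, which is what closes the argument. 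Your proposal never invokes either identity and never exhibits this competition: the sentence ``the leading contribution carries a favorable sign, while all remaining terms are $O(\delta_+)$ or smaller'' is exactly the statement requiring proof, and it genuinely depends on $|\nabla_{g_m}v|$ not degenerating at the horizons --- if $\kappa_{r_\pm}$ vanished, the positive term $-nvF'(v)>0$ would dominate and the inequality would fail. Supplying this two-line computation (rather than the unstructured ``direct substitution'' in the $r$-variable, where $v'=\kappa_r/v$ blows up and the bookkeeping is genuinely delicate) is what turns your outline into a proof.
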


Clearly, we may choose $\delta_+$ small enough so that $M\setminus {M}_{3\delta_\pm}\subset U$. Therefore, on $M_{2\delta_\pm}\cap U$ we have $\Psi_{t_{\delta_+}}^*g_m=\theta g_m$, where $\theta=\theta^{(t_{\delta_+})}$ is a positive, radially symmetric function.  We now define a metric $g_{\delta_+}$ on ${M}_{2\delta_\pm}$ by $g_{\delta_+}=\Theta\Psi^*_{t_{\delta_+}}g$, 
where $g$ is the metric in Theorem \ref{prelprop} and 
$\Theta$ is a radially symmetric function on $M$ satisfying:
\begin{itemize}
\item $\Theta=(1-e^{-\frac{1}{\delta_+}})^{\frac{4}{n-2}}\theta(r_{2\delta_+})^{-1}$ on $\{r_{2\delta_+}\leq r\leq r_+\}$;
\item $\Theta=(1-e^{-\frac{1}{\delta_-}})^{\frac{4}{n-2}}\theta(r_{2\delta_-})^{-1}$ on $\{r_-\leq r\leq r_{2\delta_-}\}$;
\item $\Theta$ linearly interpolates between these regions. 
\end{itemize}

\begin{proposition}
\label{propcoinc}
We have $g_{\delta_+}=\tilde g_{\delta_+}$ on $\partial M_{2\delta_\pm}$.
\end{proposition}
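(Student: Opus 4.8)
The plan is to verify the identity $g_{\delta_+}=\tilde g_{\delta_+}$ by a direct computation on each of the two boundary components $N_{r_{2\delta_-}}$ and $N_{r_{2\delta_+}}$ of $\partial M_{2\delta_\pm}$, using the explicit formulas defining both metrics. First I would recall that on a neighborhood of $\partial M_{2\delta_\pm}$ contained in $U$ one has $\Psi_{t_{\delta_+}}^*g_m=\theta g_m$ with $\theta$ radially symmetric, and moreover that along $\partial M$ itself the metric $g$ of Theorem \ref{prelprop} agrees with $g_m$. Since $\Psi_{t_{\delta_+}}$ carries the slice $N_{r_{2\delta_\pm}}$ precisely onto the boundary slice $N_{r_\pm}$ of $M$, we get $\Psi_{t_{\delta_+}}^*g=\Psi_{t_{\delta_+}}^*g_m=\theta g_m$ along $N_{r_{2\delta_\pm}}$. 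Hence along that slice
\[
g_{\delta_+}=\Theta\,\Psi_{t_{\delta_+}}^*g=\Theta\,\theta\,g_m,
\]
and by the prescribed value of $\Theta$ on $\{r_{2\delta_+}\leq r\leq r_+\}$ (respectively $\{r_-\leq r\leq r_{2\delta_-}\}$), namely $\Theta\theta(r_{2\delta_\pm})=(1-e^{-1/\delta_\pm})^{4/(n-2)}$, this equals $(1-e^{-1/\delta_\pm})^{4/(n-2)}g_m$ on $N_{r_{2\delta_\pm}}$.

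On the other hand, I would evaluate $\tilde g_{\delta_+}$ on the same slices directly from its definition. On $\{r_{3\delta_+}<r<r_{\delta_+}\}$ one has $\tilde g_{\delta_+}=(1-e^{-1/(v-\delta_+)})^{4/(n-2)}g_m$, and since $v(r_{2\delta_+})=2\delta_+$ by the defining condition on $r_{2\delta_\pm}$, at $r=r_{2\delta_+}$ the conformal factor becomes $(1-e^{-1/(2\delta_+-\delta_+)})^{4/(n-2)}=(1-e^{-1/\delta_+})^{4/(n-2)}$. Likewise, on $\{r_{\delta_-}<r<r_{3\delta_-}\}$ we have $\tilde g_{\delta_+}=(1-e^{-1/(v-\delta_-)})^{4/(n-2)}g_m$, and at $r=r_{2\delta_-}$ with $v(r_{2\delta_-})=2\delta_-$ this gives $(1-e^{-1/\delta_-})^{4/(n-2)}g_m$. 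Comparing with the expression for $g_{\delta_+}$ obtained above, the two metrics agree on $N_{r_{2\delta_+}}$ and on $N_{r_{2\delta_-}}$, which together make up $\partial M_{2\delta_\pm}$.

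The main point to be careful about — and the only genuine obstacle — is the bookkeeping of which domain each metric lives on near the slices $r=r_{2\delta_\pm}$: one must check that $r_{2\delta_\pm}$ indeed lies in the overlap of the region $\{r_{2\delta_+}\leq r\leq r_+\}$ (resp. $\{r_-\leq r\leq r_{2\delta_-}\}$) where $\Theta$ has its closed-form value and the region $\{r_{3\delta_+}<r<r_{\delta_+}\}$ (resp. $\{r_{\delta_-}<r<r_{3\delta_-}\}$) where $\tilde g_{\delta_+}$ has its closed-form value, which follows at once from the ordering $r_{3\delta_+}<r_{2\delta_+}<r_{\delta_+}$ and $r_{\delta_-}<r_{2\delta_-}<r_{3\delta_-}$ imposed in the construction, together with the fact (from the $\Psi_{t_{\delta_+}}$ transport) that $\Psi_{t_{\delta_+}}^*g$ and $\Psi_{t_{\delta_+}}^*g_m$ coincide on the slice because $g=g_m$ on $\partial M$. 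Once this is in place the identity is immediate; there is no analytic difficulty, only the need to match conformal factors at the correct points.
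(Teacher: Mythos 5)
Your proposal is correct and follows essentially the same computation as the paper: both use $g=g_m$ on $\partial M$ together with $\Psi_{t_{\delta_+}}^*g_m=\theta g_m$ and the prescribed boundary value of $\Theta$ to reduce $g_{\delta_+}$ on $\partial M_{2\delta_\pm}$ to $(1-e^{-1/\delta_\pm})^{4/(n-2)}g_m$, and then match this with $\tilde g_{\delta_+}$ evaluated at $v=2\delta_\pm$. The only difference is that you spell out the domain bookkeeping explicitly, which the paper leaves implicit.
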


\begin{proof}
We use that $g=g_m$ on $\partial M$ to check that, on $\partial M_{2\delta_\pm}$,
\begin{eqnarray*}
g_{\delta_+} & = & (1-e^{-\frac{1}{2\delta_\pm-\delta_\pm}})^{\frac{4}{n-2}}\theta(r_{2\delta_\pm})^{-1}\Psi^*_{t_{\delta_+}}g_m\\
& = & (1-e^{-\frac{1}{\delta_\pm}})^{\frac{4}{n-2}}\theta(r_{2\delta_\pm})^{-1}\theta(r_{2\delta_\pm})g_m\\
 &=& \tilde g_{\delta_+},
\end{eqnarray*}
as desired. 
\end{proof}

We now have all the ingredients needed to complete the proof of Theorem \ref{main}. Since $\tilde g_{\delta_+}\to g_m$ as $\delta_+\to 0$ on $\partial M_{\delta_\pm}$, we may choose $\delta_+>0$ small enough so that 
\[
\sup_{\partial M_{2\delta_\pm}} H_{\tilde g_{\delta_+}} < \inf_{\partial M} H_{g}, 
\]
Since $\mathcal L_Wg_m=0$ on $\partial M$, we have  that $\theta=\theta^{(t_{\delta_+})}\to 1$ on $\partial M_{2\delta_\pm}$ as $\delta_+\to 0$, which implies that $g_{\delta_+}\to g$ as $\delta_+\to 0$.
Thus, we have 
\[
\sup_{\partial M_{2\delta_\pm}} H_{\tilde g_{\delta_+}} < \inf_{\partial M_{2\delta_\pm}} H_{g_{\delta_+}}, 
\]
and $R_{g_{\delta_+}}>n(n-1)$ for any $\delta_+$ small enough. 
By Theorem \ref{gluing} and Proposition \ref{prop21}, there exists a metric $\hat g$ on $M_{2\delta_\pm}$ with $R_{\hat g}>n(n-1)$ and which agrees with $\tilde g_{\delta_+}$ in a neighborhood of $\partial M_{2\delta_\pm}$. Thus, $\hat g$ extends to a metric on $M$, still denoted $\hat g$, which satisfies  $R_{\hat g}\geq n(n-1)$ and 
agrees with $\tilde g_{\delta_+}$ in the region $M\setminus M_{2\delta_+}$. In particular, $\hat g=g_m$ on $M\setminus M_{\delta_\pm}$. This concludes the proof of Theorem \ref{main}.

\section{Some examples}\label{equalinf}

In this section we briefly indicate  how the metrics above can be used as building blocks in the construction of solutions of Einstein field equations with $\Lambda>0$ and whose geometry is  precisely controlled along the event horizon and in a neighborhood of spatial infinity.
The examples presented below should be compared to the vacuum solutions obtained by sophisticated gluing techniques in \cite{CP} and \cite{CPP}; see also \cite{D}.

We first describe the model solution. 
We start by gluing two copies of the gKdSS space $(M,g_m)$ along its common boundary $N_{r_+}$, thus obtaining a double gKdSS space with boundary two copies of $N_{r_-}$. We now 
glue a countable collection of such manifolds along their common boundary $N_{r_-}$, thus producing  
a one-parameter family of periodic metrics which   interpolate between an infinite sequence of unit spheres and an infinite cylinder\footnote{These Delaunay-type metrics  play a central role in questions  related to the singular Yamabe problem; see  \cite{CP} and the references therein.}. The maximal Cauchy development of this  IDS yields  a static vacuum solution of Einstein field equations with positive cosmological constant which displays an infinite number of alternating event and cosmological horizons all the way up to spatial infinity and whose boundary is isometric to $\mathbb R\times N_{r_-}$. We shall refer to such a solution as a {\em gKdSS space-time}. 
Now, if $N$ is a CROSS we can perform this same construction starting off with the manifold $(M,g)$  in Theorem \ref{main}. This provides a non-vacuum solution which satisfies the dominant energy condition and displays the same horizon behavior  as the gKdSS space-time, with alternating event and cosmological horizons. Moreover, since in the non-vacuum region the metric in Theorem \ref{main} can be taken an arbitrarily small perturbation of $g_m$, we can arrange so that the corresponding Cauchy development asymptotes the gKdSS space at spatial infinity with any prescribed rate. More drastically, we can stabilize the gluing procedure so as to obtain a non-vacuum solution which agrees with the gKdSS space-time along the event horizon and in a whole neighborhood of spatial infinity. This yields physically interesting solutions which are finite scale perturbations of a gKdSS.          

We can also provide similar examples by starting with the manifold  produced in Theorem \ref{prelprop}. This time the resulting metric admits  corners along the glued boundaries. However, since the scalar curvature of the building blocks is strictly larger than $n(n-1)$ and a jump for the mean curvature certainly occurs along the corners, the appropriate variant of Theorem \ref{gluing} as in Remark \ref{regul} can be used to obtain a smooth metric $g$ on the infinite cylinder with $R_g>n(n-1)$. This yields a nowhere vacuum IDS satisfying the dominant energy condition. Notice that we can always arrange for the boundary to be totally geodesic by attaching to it the appropriate  double of the manifold appearing in Theorem \ref{main0} and regularizing the resulting corner.   
These examples are of interest in connection with a rigidity result established by M\'aximo-Nunes \cite{MN}. More precisely, these authors show that if an embedded minimal sphere $\Sigma$ in a $3$-manifold satisfying $R\geq 6$ is strictly stable and locally maximizes the Hawking mass then a neighborhood of $\Sigma$ is isometric to a neighborhood of $\mathbb S^2_{r_-}$ in a dSS space. Since, as before, we may assume that the solution above asymptotes a dSS space at spatial infinity, it clearly carries a stable minimal sphere in the asymptotic region. However, since the underlying metric in each building block is not even locally isometric to the dSS space, this sphere cannot satisfy the above mentioned assumptions. In particular, if it is strictly stable then it does not locally maximize the Hawking mass.

Starting from the examples in the previous paragraph we may appeal to  a gluing result due to Delay \cite[Theorem 4.1]{D} to obtain  solutions which agree with the gKdSS space in a neighborhood of infinity and are non-vacuum in the complement of this neighborhood, while keeping the given bounds on the scalar curvature. Again, this yields physically relevant solutions which only differ from a gKdSS space at  finite scales. Finally, we remark that the class of examples above can be substantially enlarged  by performing connected sums around non-vacuum points, as explained in \cite{GL}.  This yields examples with  disconnected event horizons and fairly complicated topology.

\end{document}